\newtheorem{theorem}{Theorem}
\numberwithin{theorem}{section}
\newtheorem{corollary}[theorem]{Corollary}
\newtheorem{observation}[theorem]{Observation}
\newtheorem{lemma}[theorem]{Lemma}
\title{Effective Computation of the Heegaard Genus of 3-Manifolds}
\author{
  Benjamin A. Burton$^1$\\
  \and
  Finn Thompson$^2$
}
\date{
	{\itshape\small{$^{1,2}$School of Mathematics and Physics, University of Queensland, Brisbane QLD 4072, Australia.}}\\
	{\itshape\small{$^1$}\ttfamily\small{bab@maths.uq.edu.au}\hspace{2cm}\itshape\small{$^2$}\ttfamily\small{f.thompson@uq.net.au}}\\[2ex]%
	\today
}
\begin{document}

\maketitle

\begin{abstract}

The Heegaard genus is a fundamental invariant of 3-manifolds. However, computing the Heegaard genus of a triangulated 3-manifold is NP-hard, and while algorithms exist, little work has been done in making such an algorithm efficient and practical for implementation. Current algorithms use almost normal surfaces, which are an extension of the algorithm-friendly normal surface theory but which add considerable complexity for both running time and implementation.

Here we take a different approach: instead of working with almost normal surfaces, we give a general method of modifying the input triangulation that allows us to avoid almost normal surfaces entirely. The cost is just four new tetrahedra, and the benefit is that important surfaces that were once almost normal can be moved to the simpler setting of normal surfaces in the new triangulation. We apply this technique to the computation of Heegaard genus, where we develop algorithms and heuristics that prove successful in practice when applied to a data set of 3,000 closed hyperbolic 3-manifolds; we precisely determine the genus for at least 2,705 of these.

\end{abstract}

\paragraph{Keywords}3-manifolds, triangulations,  normal surfaces, computational topology, Heegaard genus

\paragraph{Supplementary Material}Software (Source Code): \href{https://github.com/FinnThompson99/heegaard}{github.com/FinnThompson99/heegaard}
	
\paragraph{Acknowledgements}We thank the referees for their helpful comments. The second author was partially supported by the Australian Research Council (grant DP150104108).

\section{Introduction}

In topology, invariants are used to distinguish between different manifolds. Powerful invariants are often hard to utilise computationally, and sometimes only partial information can be calculated for them. One such example is the fundamental group, which uniquely determines hyperbolic 3-manifolds \cite{MR236383}, but it is non-trivial to determine if two such groups are isomorphic, given their group presentations \cite{MR1324134}.

We focus on an invariant known as the \textit{Heegaard genus} of a 3-manifold, which is the minimal genus of a surface (a Heegaard surface) that splits a given manifold into two handlebodies of equal genus. For example, the 3-sphere has Heegaard genus 0, and all lens spaces have Heegaard genus 1. The Heegaard genus can be used to calculate the tunnel number of a knot, which is a useful invariant in knot theory (\cite{MR0582900}, \cite{MR3431010}).

However, computing Heegaard genus is NP-hard \cite{MR3726594}. Algorithms of Rubinstein, Lackenby, Li, and Johannson (\cite{MR1470718}, \cite{MR2443101}, \cite{MR2821570}, \cite{MR1027902}) provide theoretical methods, but primarily argue the existence of an algorithm; these algorithms are extremely complex to describe and theoretically extremely slow to run. Furthermore, no implementations currently exist.

These three algorithms all make use of surfaces embedded in triangulations of 3-manifolds. In particular, they use normal and almost normal surfaces, which are commonly used to certify topological properties of a manifold. Normal surfaces are described by vectors in $\mathbb{Z}_{\geq0}^{7n}$ for triangulations with $n$ tetrahedra, and can be efficiently generated using high-dimensional polytope vertex enumeration methods. However, \textit{almost normal} surfaces are more complicated, requiring vectors in $\mathbb{Z}_{\geq0}^{35n}$, which can significantly affect the running time of any algorithm using them, due to an exponential dependence on the dimension of the vector space.

Our strategy in this paper is to avoid the costs of almost normal surfaces by \textit{modifying the triangulation}. Since we are solving topological problems, not combinatorial problems, we aim to retriangulate our manifolds so that important almost normal surfaces in the original triangulation (such as a surface that realises a Heegaard splitting) become \textit{normal} in the new triangulation. This then allows us to attack the problem using well-studied normal surface techniques, such as the highly optimised algorithms in the software package \textit{Regina} \cite{regina}.

Our main tool is a new \textit{gadget}, which replaces a single tetrahedron and serves to eliminate the almost normal portion of the surface. Importantly, we show that this gadget preserves the \textit{zero-efficiency} of the triangulation, a property that plays a major role in making normal surface algorithms both implementable and fast (\cite{MR3208240}, \cite{MR2057531}). We identify a class of exceptional surfaces with which this gadget cannot be used, but these exceptions are of a very specific form and are simple to analyse. For computing Heegaard genus, we show how to work around these exceptions with an alternate method for locating Heegaard splittings of this form.

Using our techniques, we are able to successfully compute the Heegaard genus of 2,705 closed, hyperbolic 3-manifolds drawn from the hyperbolic census of Hodgson and Weeks \cite{MR1341719}.

\section{Preliminaries}

\subsection{Triangulations}

To study topological objects computationally, we represent $n$-manifolds using \textbf{triangulations}. These are a pairwise gluing of $n$-dimensional simplicies along their $(n-1)$-dimensional facets. Specifically, a \textbf{3-manifold triangulation} consists of a finite number of tetrahedra that are identified (or `glued') along pairs of triangular faces. Every compact 3-manifold can be represented by a triangulation \cite{MR48805}.

Let $T$ be some arbitrary triangulation (not necessarily a simplicial complex), and consider some tetrahedron $\tau$ in $T$. In the triangulation, edges, triangles (faces) and vertices of $\tau$ are not necessarily distinct. We use a canonical numbering of the vertices, as in Figure \ref{fig:tri0123}.

\begin{figure}[H]
    \centering
    \includegraphics[width=0.2\textwidth]{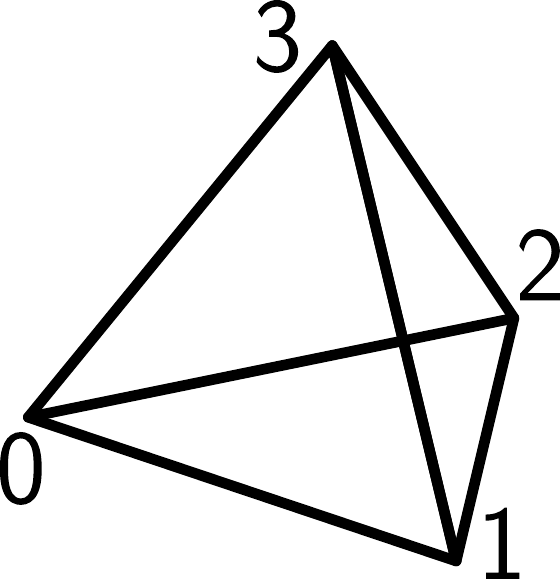}
    \caption{Canonical numbering of the vertices of a tetrahedron.}
    \label{fig:tri0123}
\end{figure}

Let $\tau^\Delta_{ijk}$ denote the triangular face constrained by vertices $(i,j,k)$ of tetrahedron $\tau$. If a given $\tau^\Delta_{ijk}$ is not a boundary facet, then this triangle is shared between (not necessarily distinct) tetrahedra $\tau$ and $\bar{\tau}$, and we may write  $\tau^\Delta_{ijk}=\bar{\tau}^\Delta_{pqr}$.

\subsection{(Almost) Normal Surfaces}
If a surface $S$ is properly embedded in a triangulation $T$, we call it a \textbf{normal surface} if it meets each tetrahedron $\tau$ of $T$ in a collection of triangular or quadrilateral \textbf{normal} discs (Figure \ref{fig:normalPieces}). In each tetrahedron, triangular discs separate one vertex from the remaining three while quadrilateral discs separate pairs of vertices. In each tetrahedron, there can be at most one of any quadrilateral type.

\begin{figure}[H]
    \centering
    \includegraphics[width=0.25\textwidth]{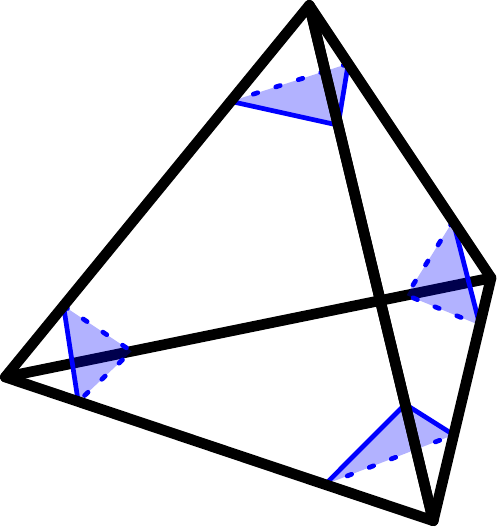}\phantom{===========}
    \includegraphics[width=0.25\textwidth]{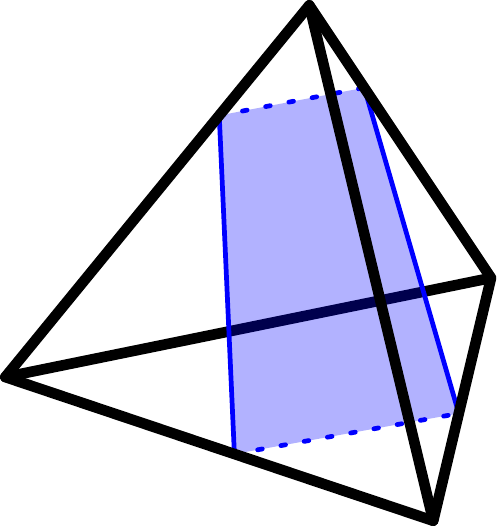}
    \caption{The four triangle pieces, and one of the three quadrilateral pieces.}
    \label{fig:normalPieces}
\end{figure}

One such surface is a \textbf{vertex linking} sphere, consisting only of triangle pieces which together form a sphere (in a closed manifold) that surrounds a particular vertex in the triangulation.

Normal surfaces are represented by \textbf{normal coordinates}, which are vectors $\mathbf{v}\in\mathbb{Z}_{\geq0}^{7|T|}$, notated by $\mathbf{v}=(v_1,v_2,\cdots,v_7,\cdots,v_{7|T|})$, where $v_i$ represent the number of each type in each tetrahedron, and $|T|$ is the number of tetrahedra in the triangulation $T$.

We call two surfaces \textbf{locally compatible} if they are able to avoid intersection in any given tetrahedron. That is, in each tetrahedron, together they use at most one type of quadrilateral piece. Locally compatible surfaces can be added using the \textbf{Haken sum}. If $U$ and $V$ are normal surfaces with normal coordinates $\mathbf{U},\mathbf{V}$, then $U+V$ has normal coordinates $\mathbf{U}+\mathbf{V}$. $U+V$ is formed by a geometric surgery (called `regular exchange'), where sections of each surface are cut and glued around curves in $U\cap V$ (as in Figure \ref{fig:hakenSum}), such that resulting pieces are normal discs (as in Figure \ref{fig:hakenNormal}).

\begin{figure}[H]
    \centering
    \includegraphics[width=0.8\textwidth]{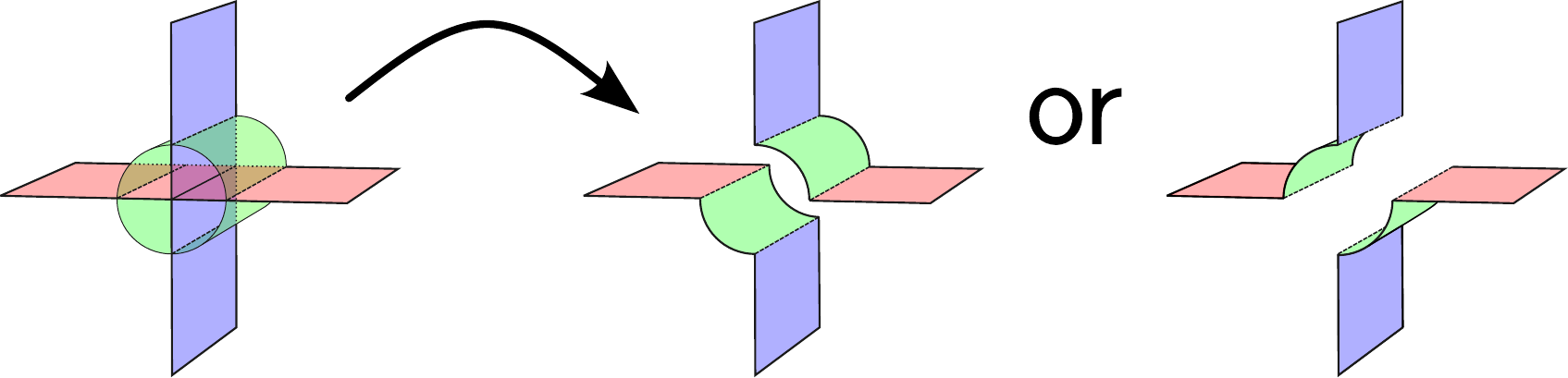}
    \caption{Around each intersection $\gamma$, the boundary of a neighbourhood (green) of $\gamma$ is partitioned between the two sheets (red, blue) in one of two ways.}
    \label{fig:hakenSum}
\end{figure}

\begin{figure}[H]
    \centering
    \includegraphics[width=0.8\textwidth]{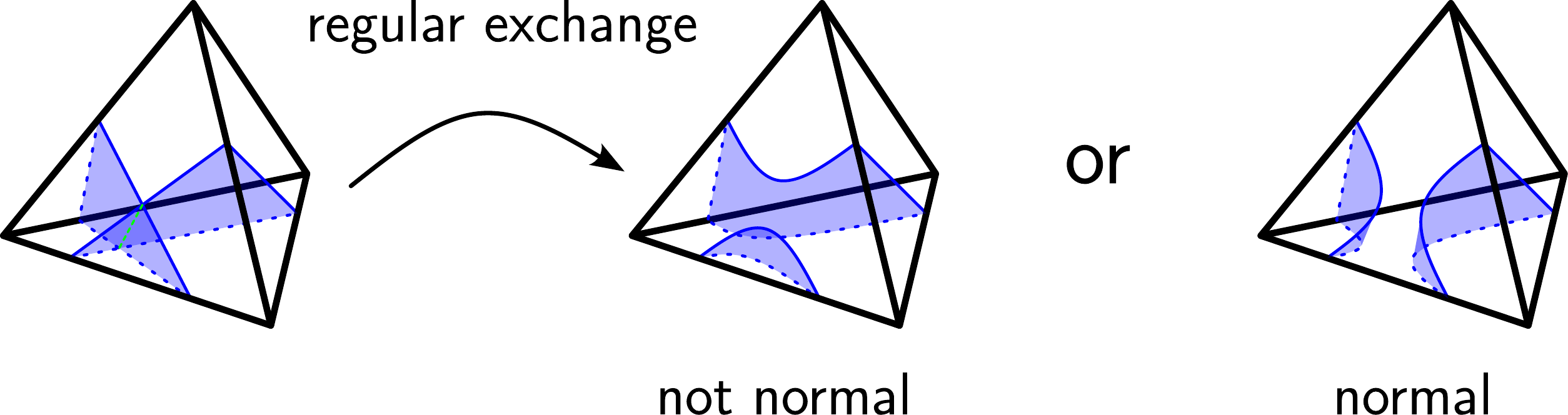}
    \caption{A regular exchange is performed on two intersecting pieces in two different ways.}
    \label{fig:hakenNormal}
\end{figure}

We call a normal surface $S$ \textbf{fundamental} if $S=A+B$ implies $\mathbf{A}=\mathbf{0}$ or $\mathbf{B}=\mathbf{0}$. By standard Hilbert basis arguments, any normal surface can be expressed as a sum of fundamental surfaces (which is possibly not unique).

An \textbf{almost normal surface} is similar to a normal surface, but with the addition that it must intersect one tetrahedron in a collection of triangles and quadrilaterals, and exactly one unknotted annulus or octagon piece (see Figure \ref{fig:almostNormalPieces}). We shall refer to annulus pieces between quadrilaterals or triangles of the same type as parallel, and all others as non-parallel.

\begin{figure}[H]
    \centering
    \includegraphics[width=0.17\textwidth]{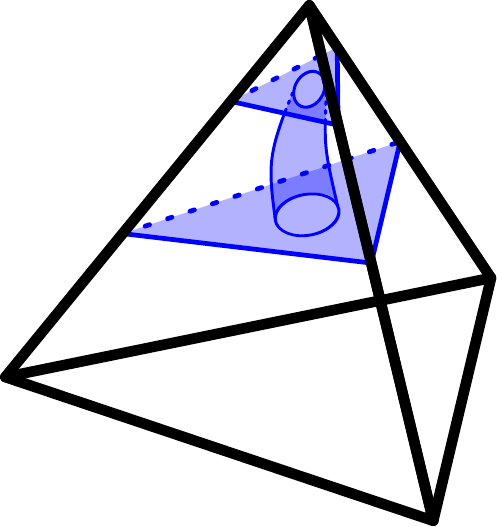}\phantom{=}
    \includegraphics[width=0.17\textwidth]{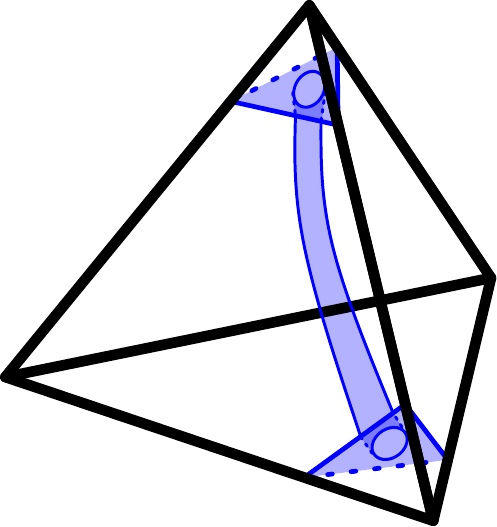}\phantom{=}
    \includegraphics[width=0.17\textwidth]{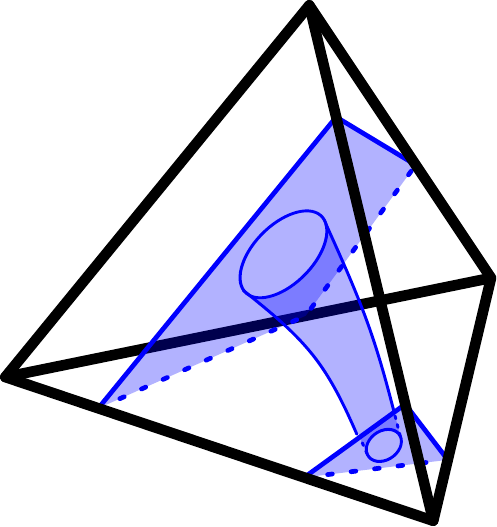}\phantom{=}
    \includegraphics[width=0.17\textwidth]{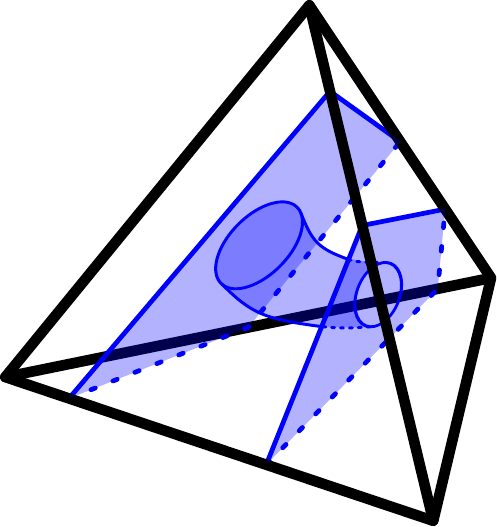}\phantom{=}
    \includegraphics[width=0.17\textwidth]{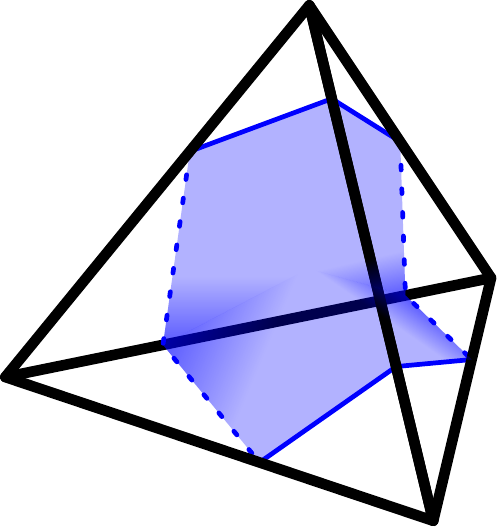}
    \caption{Annuli between parallel triangles, non-parallel triangles, a triangle and a quadrilateral, and parallel quadrilaterals; an octagon piece.}
    \label{fig:almostNormalPieces}
\end{figure}

Of these almost normal pieces, there are three octagons, three parallel quadrilateral annuli, four parallel triangle annuli, six non-parallel triangle annuli and twelve triangle-quadrilateral annuli. With the four triangle and three quadrilateral normal pieces, this means that normal coordinates for almost normal surfaces are vectors in $\mathbb{Z}_{\geq0}^{35|T|}$.

We call a triangulation of a closed 3-manifold \textbf{zero-efficient} if its only normal 2-spheres are vertex linking. Indeed, all closed, orientable, irreducible 3-manifolds (excluding $\mathbb{S}^3$, $L(3,1)$ and $\mathbb{R}P^2$) can be modified to a zero-efficient triangulation, with only one vertex. A celebrated result of Jaco and Rubinstein states that if a zero-efficient triangulation of a 3-manifold $M$ has an almost normal 2-sphere, then $M=\mathbb{S}^3$ \cite{MR2057531}.

\subsection{Additional Notation}
For a normal or almost normal surface $S$ in a triangulation $T$, let $S_U$ represent the part of $S$ restricted to some connected sub-triangulation $U\subset T$.

For a normal surface $S$ in a triangulation $T$, we use $\mathbf{S_\tau}=(t_0,t_1,t_2,t_3,q_{01/23},q_{02/13},q_{03/12})$ to refer to the normal coordinates of $S$ restricted to some tetrahedron $\tau$.

When referring to specific normal or almost normal pieces, we shall use the following.
\begin{itemize}
    \item \texttt{tri\_a}: the link of the vertex $a$
    \item \texttt{quad\_ab/cd}: the quadrilateral separating vertices $a,b$ from $c,d$
    \item \texttt{oct\_ab/cd}: the octagon separating vertices $a,b$ from $c,d$
    \item \texttt{tri\_a:tri\_b}: the annulus connecting \texttt{tri\_a} to \texttt{tri\_b}, allowing $a=b$.
    \item \texttt{tri\_e:quad\_ab/cd}: the annulus connecting \texttt{tri\_e} and \texttt{quad\_ab/cd}, where $e\in\{a,b,c,d\}$.
\end{itemize}

\subsection{Normalisation Moves}\label{appNORMAL}
We can convert surfaces which are not yet normal into normal surfaces through a process called normalisation \cite{MR2057531}. This is an iterative process using \textit{normalisation moves}, used to convert a surface into a normal surface (possibly making changes to the topology of the surface, but which are harmless).

Two particular examples that are relevant to our work are as follows:
\begin{itemize}
    \item \textbf{Compression} within a tetrahedron, wherein a tubed region of the surface is cut and then `capped' off with a disk. This does not change the intersection of the surface with the boundary of the tetrahedron, but reduces the Euler characteristic.
    \item \textbf{Pulling} a piece across the boundary of a tetrahedron, where the surface is `pulled' across an edge, reducing the number of intersections with the boundary of the tetrahedron. This possibly changes the Euler characteristic.
\end{itemize}
This second move may be used on octagon pieces in a normal surface, as seen in Figure \ref{fig:octagonCompression} where an isotopy of \texttt{oct\_02/13} results in just \texttt{tri\_0} and \texttt{tri\_2}.

\begin{figure}[H]
    \centering
    \includegraphics[width=0.7\textwidth]{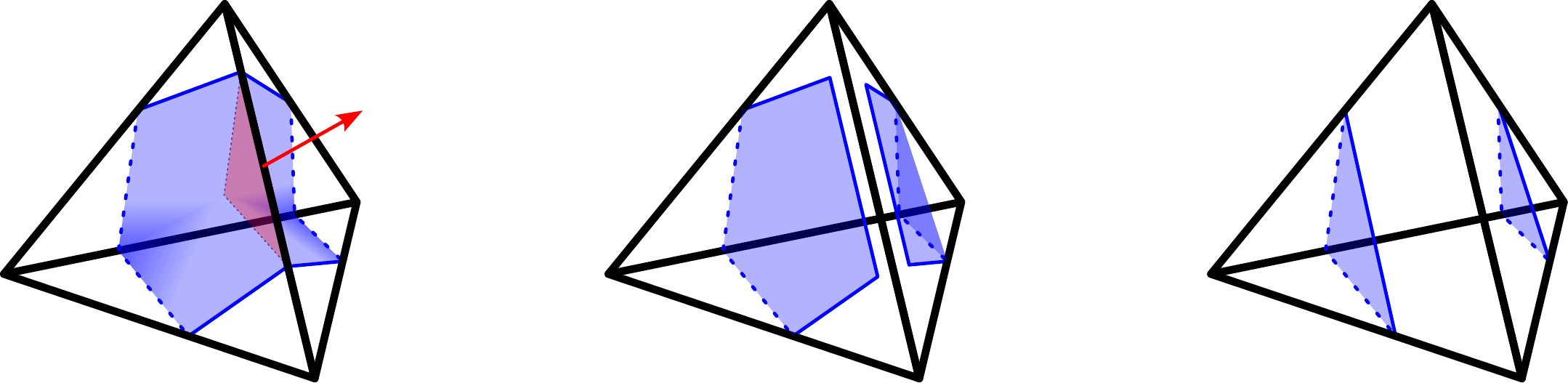}
    \caption{An octagon is `pulled' towards an edge, creating two triangles.}
    \label{fig:octagonCompression}
\end{figure}
Normalisation moves may introduce other non-normal pieces, but combinations of these moves will eventually yield a normal surface.

\subsection{Implementations of Normal and Almost Normal Surfaces}\label{runningtimeseccie}
\textit{Regina} \cite{regina} efficiently implements data structures and algorithms for normal surfaces. However, they are still inherently exponential methods. Indeed, we have found an experimental bound on the running time of fundamental normal surface enumeration for closed hyperbolic 3-manifolds, $T(n)\in\mathcal{O}(1.934^n)$ for a triangulation with $n$ tetrahedra. See \cite{burton14-hilbert-alenex} for more rigorous experimental results. The number of fundamental normal surfaces for a triangulation with $n$ tetrahedra is $C(n)\in\mathcal{O}(\phi^n)$ where $\phi=\frac{1+\sqrt{5}}{2}$ \cite{MR2742952}. On the same set of closed hyperbolic 3-manifolds, we have experimentally found $C(n)\in\mathcal{O}(1.503^n)$. See Appendix \ref{appTIMES} for details. We use these experimental recordings in place of more theoretical bounds, as theoretical bounds generally represent the worst-case running times.

Almost normal surfaces have historically been motivated by 3-sphere recognition \cite{MR1403961}, which by a result of Thompson only requires use of the octagon piece \cite{MR1295555}.
In this special setting, there are techniques to normalise an almost normal surface with an octagon piece---for instance, \textit{Regina} features a simple algorithm to remove octagons from an almost normal surface by modifying the triangulation using a 3-tetrahedron gadget. This technique is used in the algorithm for cutting along almost normal surfaces \cite{githubIssue}.

The more general setting that allows octagon \emph{or} annulus pieces is significantly more complex to work with (with normal coordinates in $\mathbb{Z}^{35|T|}$), and there are no implementations known to the authors.

\subsection{Heegaard Genus}\label{heegen}

Any closed, orientable 3-manifold $M$ can be represented as $M=H_1\cup H_2$ where $H_1$ and $H_2$ are \textbf{handlebodies} (solid genus $n$ tori) with $\partial H_1=\partial H_2=H_1\cap H_2$. This decomposition is called a \textbf{Heegaard splitting} of $M$, and $\partial H_1$ is the \textbf{Heegaard surface}. The \textbf{Heegaard genus} of $M$ is the minimal genus of all Heegaard splittings of $M$.

All 3-manifolds with Heegaard genus 1 are classified as being either $\mathbb{S}^1\times \mathbb{S}^2$ or a Lens space. Similarly, $\mathbb{S}^3$ is the only 3-manifold with Heegaard genus 0 \cite{MR2893651}. Manifolds with genus $\geq2$ are not fully classified, and only some classes of examples are known. In general, computing the Heegaard genus of a 3-manifold is an NP-hard problem \cite{MR3726594}.

We choose to follow the techniques of Rubinstein for determining Heegaard genus \cite{MR1470718}. For a zero-efficient triangulation $T$ of a closed, orientable $3$-manifold, a Heegaard splitting will be of the form $S=S_{an}+S_t$ where $S_{an}$ is an almost normal surface with negative Euler characteristic, and $S_t$ is a normal surface with zero Euler characteristic \cite{MR1470718}.

To determine if a triangulation has a genus $g$ Heegaard splitting, we generate candidate almost normal surfaces and test all combinations for $S$ with Euler characteristic $\chi(S_{an})=2-2g<0$. That is, we cut along each candidate surface, and test whether the resulting piece/s are two handlebodies of genus $g$, using an efficient algorithm for handlebody recognition \cite{MR4604006}. We use algebraic properties of a given manifold $M$ to form a lower bound on genus \cite{MR3037787}. In particular, we have $\mathrm{rank}(H_1(M))\leq\mathrm{rank}(\pi_1(M))\leq g$ where $\mathrm{rank}(H_1(M))$ and $\mathrm{rank}(\pi_1(M))$ are the number of generators of the first homology group and fundamental group of $M$, respectively. Hence, we use $\max(2,\,\mathrm{rank}(H_1(M)))\leq g$ as a lower bound.

\section{A Gadget for Normalising Almost Normal Surfaces}

Given an orientable triangulation $T$ and an almost normal surface $S$ in $T$, we seek a way to modify $T$ into another triangulation $T'$ of the same 3-manifold, so that $S$ can be expressed in $T'$ using normal surfaces only, similar to the octagon gadget (see Section \ref{runningtimeseccie}). As almost normal pieces only appear within a single tetrahedron, we desire a small triangulation that we may substitute for a tetrahedron. That is, we desire an orientable triangulation $G$ that is homeomorphic to the 3-ball ($B^3$), with four vertices and four boundary faces. We shall call such a triangulation \textbf{the gadget}.

\subsection{A General Gadget Construction}

Suppose that $G$ is a triangulation such that $G\simeq B^3$, with four vertices and four boundary faces, arranged to form the boundary of a tetrahedron. We define $\partial G^\Delta_{ijk}$ to represent the boundary facet that is constrained by vertices $(i,j,k)$ of the gadget $G$. Then, the boundary of $G$ is isomorphic to the boundary of a single tetrahedron, and so we can effectively `replace' a tetrahedron $\tau$ in $T$ with $G$ as follows.

We define $T_\tau^{abcd(G)}$ to be the resulting triangulation formed by removing $\tau$ from $T$, and gluing $G$ in its place according to the permutation mapping vertices of $\tau$ to $G$ by $(0,1,2,3)\mapsto(a,b,c,d)$. This operation is simply un-gluing each face of $\tau$ from $T$, then gluing the corresponding boundary face of $G$ in its place (as indicated in Figure \ref{fig:gadgetGluing}) where
\[\tau^\Delta_{012}\curvearrowright \partial G^\Delta_{abc},\quad \tau^\Delta_{013}\curvearrowright \partial G^\Delta_{abd},\quad\tau^\Delta_{023}\curvearrowright \partial G^\Delta_{acd},\quad\tau^\Delta_{123}\curvearrowright \partial G^\Delta_{bcd}.\]

\begin{figure}[H]
    \centering
    \includegraphics[scale=0.2]{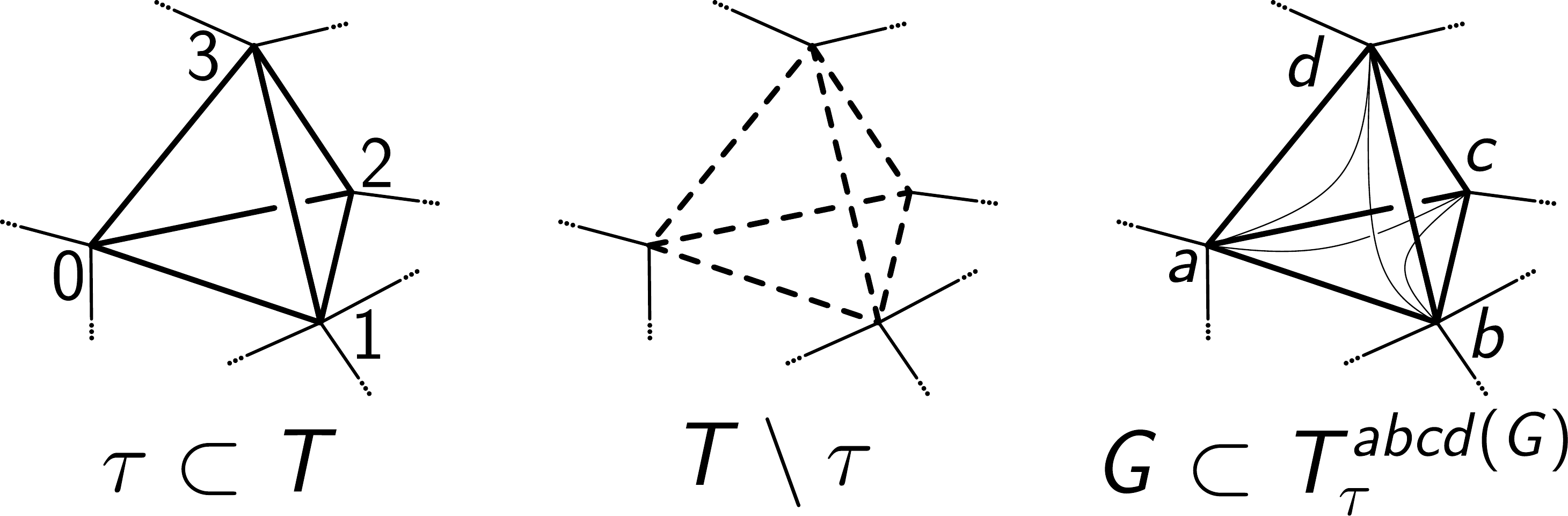}
    \caption{Some tetrahedron $\tau$ in $T$; $T$ with $\tau$ removed; an abstract depiction of $G$ in $T'=T_\tau^{abcd(G)}$.}
    \label{fig:gadgetGluing}
\end{figure}

If any pair of faces of $\tau$ are glued to each other, for example $\tau^\Delta_{012}=\tau^\Delta_{312}$, then we glue the boundary faces of the gadget accordingly, such as $\partial G^\Delta_{abd}\curvearrowright\partial G^\Delta_{dab}$.

\subsection{Gadget Requirements}\label{reqs}
Here, we discuss motivations behind a choice for the gadget. As discussed, we assume that $G$ has isomorphic boundary to the boundary of a tetrahedron, and is itself homeomorphic to $B^3$. Now, consider some gluing of the gadget into a triangulation to form $T'=T_\tau^{abcd(G)}$. 
As $\partial G\cong\partial\tau$ and $G\cong\tau$, $G$ does not modify topological properties of the triangulation, $T$, in which it is placed. In addition, we desire the following.

\begin{itemize}
    \item If $T$ is zero-efficient, then $T'=T_\tau^{abcd(G)}$ is also zero-efficient, aside from simple exceptions.
    \item There must be a well-defined choice of normal surfaces in $G$ that each correspond to one of the four triangular or three quadrilateral pieces of a normal surface in $\tau$. Similarly, there must be a well-defined choice of an octagon piece, and some annulus pieces (discussed in Lemma \ref{parallelLemma}), such that local compatibility of these surfaces matches the local compatibility of (almost) normal pieces. For example, we require that the triangle pieces in $G$ are compatible with all other pieces.
    
    This means that if $S$ is a normal surface in $T$, then there must exist some normal surface $S'$ in $T'$ with identical normal coordinates $\mathbf{S_{T\setminus\tau}}=\mathbf{S'_{T'\setminus G}}$ outside of $\tau$/$G$, where additionally $S$ is isotopic to $S'$. Similarly, the normal surfaces of $G$ chosen to represent (almost) normal pieces should be sufficient to construct any almost normal surface of $T$, aside from some manageable exceptions. That is, if a surface $S$ has an almost normal piece in $\tau$, then for some (possibly distinct) tetrahedron $\tau'$ and some permutation $(a,b,c,d)$, there must exist a normal surface $S'$ in $T_{\tau'}^{abcd(G)}$ where $S$ is isotopic to $S'$. 
\end{itemize}

\subsection{Choice of the Gadget}

The requirements that the boundary of $G$ must be isomorphic to the boundary of a tetrahedron, and that $G$ is homeomorphic to $B^3$, are both targeted using \textit{Regina}'s \texttt{tricensus} command, which generates a list of triangulations satisfying given conditions.

To ensure that the chosen triangulation would have surfaces equivalent to annulus or octagon pieces, a search of the fundamental normal surfaces was conducted. Examining the boundary components of each surface for each triangulation yielded a candidate for our gadget $G$.

We now denote the 5-tetrahedron triangulation with isomorphism signature (a compact code used to uniquely generate triangulations in \textit{Regina}) \texttt{`fHLMabddeaaaa'} by $G$, or \textit{the gadget}.

In version 7.x of \textit{Regina}, the command \texttt{Triangulation3(`fHLMabddeaaaa')} can be used to reconstruct our gadget. Table \ref{tabGlue} provides the gluing instructions for pairs of faces of the five tetrahedra. Here, `Face $abc$' refers to the triangular face that meets vertices $(a,b,c)$, and $k (abc)$ represents face $abc$ of tetrahedron $k$.

\begin{table}[H]
\centering
\begin{tabular}{|l|l|l|l|l|}
\hline
Tetrahedron & Face 012 & Face 013 & Face 023 & Face 123 \\ \hline
0           & 2 (012)  & 1 (013)  &          & 1 (123) \\ \hline
1           & 3 (012)  & 0 (013)  & 3 (023)  & 0 (123) \\ \hline
2           & 0 (012)  & 4 (013)  & 4 (023)  & 3 (123) \\ \hline
3           & 1 (012)  &          & 1 (023)  & 2 (123) \\ \hline
4           &          & 2 (013)  & 2 (023)  &         \\ \hline
\end{tabular}
\caption{Construction of the gadget.}
\label{tabGlue}
\end{table}

Alternatively, Figure \ref{fig:gadget-pic} provides a geometric description of the construction of the gadget.

\begin{figure}[H]
    \centering
    \includegraphics[width=0.8\textwidth]{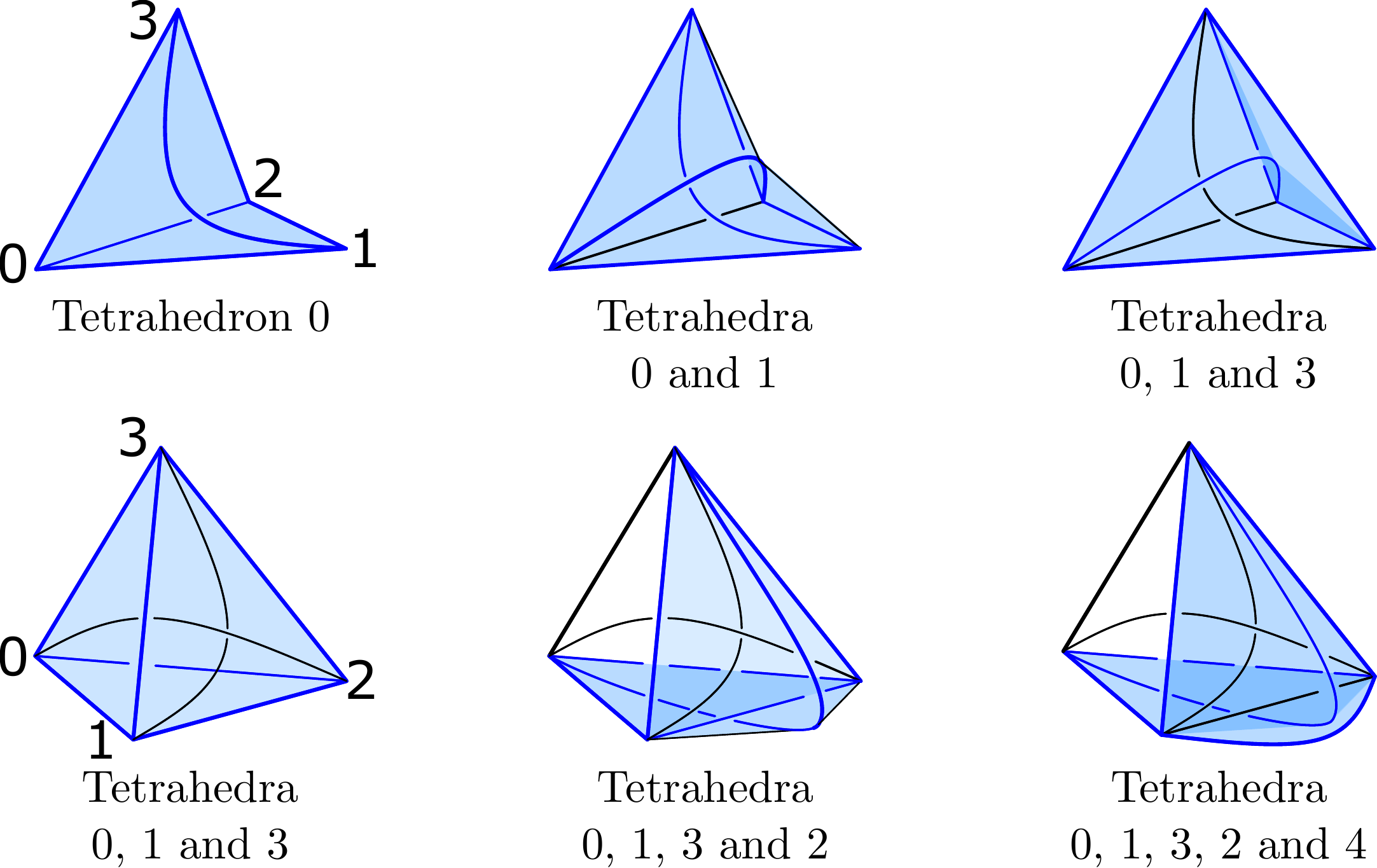}
    \caption{Geometric construction of $G$; an additional tetrahedron is glued in each step.}
    \label{fig:gadget-pic}
\end{figure}

The fundamental normal surfaces of $G$ are detailed below, along with their index when generated by \textit{Regina}. See Table \ref{tab:surfCoords} of Appendix \ref{appSURFTABLE} for details of their normal coordinates. Boundary types are determined by observing the polygonal boundary of each surface in the gadget, and determining its shape (e.g triangular boundaries have three sides).

\begin{table}[H]
\begin{tabular}{|r|r|l|l|l|}
\hline
\multicolumn{1}{|l|}{\# bdry} & \multicolumn{1}{l|}{Euler char.} & Shape                       & Boundary types & Index           \\ \hline
\multirow{4}{*}{1}               & -1                               & One-punctured torus               & Quad           & 0, 3            \\ \cline{2-5} 
                                 & \multirow{3}{*}{1}               & Octagon                           & Oct            & 17, 18          \\ \cline{3-5} 
                                 &                                  & Triangle                          & Tri            & 7, 8, 10, 13    \\ \cline{3-5} 
                                 &                                  & Quadrilateral                     & Quad           & 2, 5, 6, 11, 12 \\ \hline
\multirow{3}{*}{2}               & \multirow{3}{*}{0}               & Annulus (non-parallel triangles)          & Two tri        & 1, 4, 16, 19    \\ \cline{3-5} 
                                 &                                  & Annulus (quadrilateral, triangle) & Quad, tri      & 9, 27           \\ \cline{3-5} 
                                 &                                  & Annulus                           & Oct, tri       & 15, 26          \\ \hline
\multirow{4}{*}{3}               & \multirow{3}{*}{-1}              & Three-punctured sphere            & Three tri      & 14, 25          \\ \cline{3-5} 
                                 &                                  & Three-punctured sphere            & Quad, two tri  & 23, 24, 28      \\ \cline{3-5} 
                                 &                                  & Three-punctured sphere            & Oct, two tri   & 22              \\ \cline{2-5} 
                                 & -3                               & Three-punctured torus             & Quad, two tri  & 20              \\ \hline
4                                & -2                               & Four-punctured sphere             & Four tri       & 21              \\ \hline
\end{tabular}
\caption{Fundamental normal surfaces of $G$, detailed by number of boundaries (\# bdry), Euler characteristic, topological description, boundary types and their index.}
\label{tab:surfDetails}
\end{table}
We refer to the surface with index \texttt{i} by \texttt{sf\_i}. Note that rows where multiple indexes are listed simply represent instances of different orientations of the same surface.

Next, we explore the surfaces in $G$ that correspond to normal and almost normal pieces in a single tetrahedron $\tau$.

\paragraph*{Triangles}
The four triangular disc surfaces (\texttt{sf\_7}, \texttt{sf\_8}, \texttt{sf\_10}, \texttt{sf\_13}) are vertex links in $G$, and are hence composed of triangle pieces only. These are isotopic to the four triangular pieces we consider for a normal surface. Due to this, we introduce the following more insightful names,
\[G_{t_0}=\texttt{sf\_13},\quad G_{t_1}=\texttt{sf\_10},\quad G_{t_2}=\texttt{sf\_8},\quad G_{t_3}=\texttt{sf\_7}.\]

\paragraph*{Quadrilaterals}
The three quadrilateral pieces in $\tau$ are each isotopic to at least one of the quadrilateral surfaces in the gadget (\texttt{sf\_2}, \texttt{sf\_5}, \texttt{sf\_6}, \texttt{sf\_11}, \texttt{sf\_12}).

Surfaces \texttt{sf\_2} and \texttt{sf\_12} both represent \texttt{quad\_03/12}. However, surface \texttt{sf\_12} is not locally compatible with the annulus \texttt{tri\_3:quad\_03/12} (that is, \texttt{sf\_9}) -- this is undesired, so we choose surface \texttt{sf\_2} to represent this quadrilateral.

Similarly, surfaces \texttt{sf\_5} and \texttt{sf\_11} both represent \texttt{quad\_02/13}. Without loss of generality, we arbitrarily choose surface \texttt{sf\_5} to represent this quadrilateral. So, we introduce the following names,
\[G_{q_{01/23}}=\texttt{sf\_6},\quad G_{q_{02/13}}=\texttt{sf\_2},\quad G_{q_{03/12}}=\texttt{sf\_5}.\]

\begin{corollary}\label{normalCoords}
    With this construction, for a normal surface $S$ in a triangulation $T$, we can choose a normal surface $S'$ in $T'=T_{\tau}^{abcd(G)}$ where in $G\subset T'$, $S'_G$ has normal coordinates
    \[\mathbf{S'_{G}}=t_0\mathbf{G_{t_a}}+t_1\mathbf{G_{t_b}}+t_2\mathbf{G_{t_c}}+t_3\mathbf{G_{t_d}}+q_{01/23}\mathbf{G_{q_{ab/cd}}}+q_{02/13}\mathbf{G_{q_{ac/bd}}}+q_{03/12}\mathbf{G_{q_{ad/bc}}},\]
    where $\mathbf{S_\tau}=(t_0,t_1,t_2,t_3,q_{01/23},q_{02/13},q_{03/12})$ are the normal coordinates of $S$ in $\tau$. Hence, $S$ is isotopic to $S'$.
\end{corollary}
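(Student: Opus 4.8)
The plan is to build $S'$ piece by piece, leaving everything outside the modified tetrahedron untouched and only replacing the portion of $S$ lying inside $\tau$. Since $\partial G \cong \partial\tau$, the triangulation $T'\setminus G$ is literally identical to $T\setminus\tau$, so I would set $\mathbf{S'_{T'\setminus G}} = \mathbf{S_{T\setminus\tau}}$ and take $\mathbf{S'_G}$ to be the linear combination in the statement. The quadrilateral constraint satisfied by the normal surface $S$ guarantees that at most one of $q_{01/23}, q_{02/13}, q_{03/12}$ is nonzero, so this combination is really a stack of parallel triangular discs together with parallel copies of a single quadrilateral type; this is exactly the local picture inside $\tau$ itself.

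First I would check that the formula defines a genuine normal surface in $G$, i.e. that the chosen representatives $G_{t_a},\dots,G_{q_{ad/bc}}$ are pairwise locally compatible within each of the five tetrahedra of $G$, so that the Haken sum is well defined and yields only normal discs. This should mirror the compatibility structure inside a single tetrahedron: the vertex-link surfaces $G_{t_i}$ are compatible with every other piece, and since only one quadrilateral type occurs, no incompatible pair of quadrilaterals is ever summed. Here I would appeal to the explicit normal coordinates recorded in Table~\ref{tab:surfDetails} and Table~\ref{tab:surfCoords}, which were computed precisely so that the representatives avoid the compatibility failures (such as that of \texttt{sf\_12} with \texttt{sf\_9}) flagged in the preceding discussion.

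Next I would verify boundary matching. Each $G_{t_i}$ is the link of vertex $i$ and each $G_{q}$ is the chosen quadrilateral surface, so on $\partial G$ their boundary arcs agree, face by face, with the normal arcs cut out by \texttt{tri\_}$i$ and the corresponding quadrilateral on $\partial\tau$. Summing, the boundary curves of $S'_G$ on $\partial G$ coincide with those of $S_\tau$ on $\partial\tau$ under the identification $(0,1,2,3)\mapsto(a,b,c,d)$. Consequently $S'_G$ glues consistently to $S'_{T'\setminus G}$ along the shared faces, producing a properly embedded and (by construction) normal surface $S'$ in $T'$ with $\mathbf{S'_{T'\setminus G}} = \mathbf{S_{T\setminus\tau}}$, as required.

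Finally, to obtain the isotopy I would use that $G \cong \tau \cong B^3$ with matching boundary data. Each $G_{t_i}$ is a disc isotopic rel boundary to the triangular piece \texttt{tri\_}$i$, and each $G_{q}$ is a disc isotopic rel boundary to its quadrilateral, as established in the discussion preceding the corollary; since a properly embedded disc in a ball is determined up to isotopy rel boundary by its boundary curve, the whole parallel stack $S'_G$ is isotopic rel $\partial$ to the stack $S_\tau$. Because the two surfaces agree identically outside the ball, these rel-boundary isotopies assemble into an ambient isotopy of $T'$ carrying $S'$ to $S$. The main obstacle I anticipate is the first step: confirming that the Haken sum of several parallel copies of these five-tetrahedron gadget surfaces introduces no non-normal discs and no unexpected intersections, which ultimately rests on the explicit coordinate bookkeeping of the gadget's fundamental surfaces rather than on any soft topological argument.
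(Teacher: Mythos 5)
Your proposal is correct and follows essentially the same route as the paper, which states Corollary~\ref{normalCoords} without a separate proof precisely because it is meant to follow from the preceding construction: the $G_{t_i}$ are vertex links isotopic to the triangular pieces, the chosen $G_q$ are isotopic to the quadrilateral pieces, their boundaries match those of $S_\tau$ on $\partial G\cong\partial\tau$, and the sum is admissible since at most one quadrilateral type occurs. The only remark worth adding is that your anticipated ``main obstacle'' is not really one: because the $G_{t_i}$ have all-triangle coordinates and only one $G_q$ ever appears, the coordinate vector is automatically admissible and satisfies the matching equations, so it determines an embedded normal surface directly, without any delicate Haken-sum bookkeeping.
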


\paragraph*{Annuli and Octagons}
Among the normal surfaces in the gadget, we also have candidates for almost normal pieces. Namely, \texttt{sf\_17} and \texttt{sf\_18} are both discs with octagonal boundary, and we declare that
\[G_{O_{02/13}}=\texttt{sf\_18},\quad G_{O_{03/12}}=\texttt{sf\_17}.\]
Finally, with the presence of four different annulus pieces between non-parallel triangles, and two of triangle-quadrilateral type, we choose
\[G_{t_0}^{t_2}=\texttt{sf\_19},\,\,\, G_{t_0}^{t_3}=\texttt{sf\_16},\,\,\,G_{t_1}^{t_2}=\texttt{sf\_1},\,\,\,G_{t_1}^{t_3}=\texttt{sf\_4},\,\,\,G_{t_0}^{q_{03/12}}=\texttt{sf\_27},\,\,\,G_{t_3}^{q_{03/12}}=\texttt{sf\_9}.\]
Note that parallel annuli are missing from $G$, this will be discussed later.

\subsection{Verifying Requirements}

With the normal and almost normal pieces of the gadget identified, we now show that our choice of $G$ satisfies the properties discussed in Section \ref{reqs}.

\begin{observation}\label{obs}
    The sum of any two fundamental normal surfaces in $G$ can be expressed as the sum of pairwise disjoint fundamental surfaces. This can be verified in \textit{Regina}.

    Note that this is not true for all triangulations, as in general the connected components of a normal surface $A+B$ may include non-fundamental surfaces.
\end{observation}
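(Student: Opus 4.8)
\section*{Proof proposal}

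The plan is to verify the statement by a finite, exhaustive search over the fundamental surfaces of $G$, after reformulating it in a way that isolates the real content. First, observe that a fundamental surface is necessarily connected: a disconnected normal surface $X \sqcup Y$ has coordinate vector $\mathbf{X} + \mathbf{Y}$ with both summands nonzero, so it cannot lie in the Hilbert basis. Since the connected components of a normal surface are unique, any expression of $A + B$ as a sum of \emph{pairwise disjoint} fundamental surfaces must be exactly its decomposition into connected components. The statement is therefore equivalent to the following: for every locally compatible pair of fundamental surfaces $A, B$, each connected component of the embedded surface $A + B$ is itself fundamental.

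With this reformulation, the result reduces to a finite computation. The gadget has precisely the $29$ fundamental surfaces $\texttt{sf\_0}, \ldots, \texttt{sf\_28}$ recorded in Table~\ref{tab:surfDetails}, so only the unordered pairs $(A, B)$ need be examined; the diagonal pairs $A = B$ give two disjoint parallel copies (every surface in Table~\ref{tab:surfDetails} is orientable, hence two-sided in the orientable gadget) and require no further work. For each remaining pair I would first test local compatibility by comparing the quadrilateral coordinates in each of the five tetrahedra of $G$: unless $A$ and $B$ together use at most one quadrilateral type in every tetrahedron, $A + B$ is undefined and the pair is discarded. For each surviving compatible pair, $A + B$ is the unique normal surface with coordinate vector $\mathbf{A} + \mathbf{B}$; I would split it into connected components $C_1, \ldots, C_k$ and confirm that each $\mathbf{C_i}$ coincides with one of the $29$ fundamental vectors. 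As $\sum_i \mathbf{C_i} = \mathbf{A} + \mathbf{B}$ and the components are disjoint by construction, this matching completes the verification, which is precisely what Regina performs.

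The one genuinely substantive point --- and the reason the accompanying remark stresses that the statement fails in general --- is this final matching step: there is no a priori reason for a connected component of $A + B$ to be fundamental, since a connected surface may perfectly well have a coordinate vector that decomposes nontrivially in the normal cone. That this never occurs for $G$ is exactly the content of the observation, and it relies on the specific combinatorics of this gadget rather than on any general principle. I therefore expect the main difficulty to be organisational --- correctly enumerating the compatible pairs and matching each component against the fundamental list --- rather than any analytic obstacle. Should a shorter argument be wanted, the symmetry group of $G$ (the symmetries responsible for the repeated indices in Table~\ref{tab:surfDetails}) could be used to reduce the number of distinct pairs to inspect, but this is merely an optimisation and is not needed for correctness.
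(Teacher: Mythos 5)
Your proposal is correct and takes essentially the same route as the paper, which offers no argument beyond ``this can be verified in \emph{Regina}'': you simply spell out what that finite verification entails. Your preliminary reformulation (fundamental surfaces are connected, so a pairwise-disjoint fundamental decomposition of $A+B$ must be its decomposition into connected components, reducing the claim to checking that every component of every compatible sum is fundamental) is a correct and useful precisification of the computation the authors leave implicit.
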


\begin{lemma}\label{fundLemma}
    Any normal surface in the gadget can be expressed as a sum of pairwise disjoint fundamental normal surfaces.

\end{lemma}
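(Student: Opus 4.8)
The plan is to prove Lemma \ref{fundLemma} by induction on the total weight (or equivalently the $L^1$-norm of the normal coordinate vector) of an arbitrary normal surface $N$ in $G$, leveraging the special structural fact recorded in Observation \ref{obs}. First I would recall the standard Hilbert-basis fact already cited in the excerpt: any normal surface $N$ in $G$ can be written as a sum $N = F_1 + F_2 + \cdots + F_k$ of fundamental normal surfaces, where the $F_i$ are pairwise locally compatible (they share at most one quadrilateral type per tetrahedron, since their Haken sum $N$ is an embedded surface). This decomposition is generally not into \emph{disjoint} pieces, so the whole content of the lemma is upgrading ``sum of fundamental surfaces'' to ``sum of pairwise \emph{disjoint} fundamental surfaces.''

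The inductive step is where Observation \ref{obs} does the heavy lifting. Given $N = F_1 + \cdots + F_k$ with $k \geq 2$, I would isolate the Haken sum of just the first two summands, $F_1 + F_2$, which is itself a normal surface in $G$. By Observation \ref{obs}, this two-term sum can be rewritten as a sum of \emph{pairwise disjoint} fundamental surfaces, say $F_1 + F_2 = D_1 + \cdots + D_m$ with the $D_j$ mutually disjoint. Substituting back gives
\[
N = D_1 + \cdots + D_m + F_3 + \cdots + F_k,
\]
which is again a sum of fundamental normal surfaces, but now the number of summands that are not yet known to be disjoint from the rest has been controlled. The key bookkeeping is to define a suitable complexity measure that strictly decreases under this replacement --- for instance the number of pairs $(F_i, F_j)$ whose geometric intersection $F_i \cap F_j$ is nonempty, or simply the total intersection number $\sum_{i<j} |F_i \cap F_j|$ counted as curves of the regular exchange. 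Because the $D_j$ arising from Observation \ref{obs} are pairwise disjoint, repeatedly applying this replacement to intersecting pairs drives the total intersection number to zero, at which point every summand is disjoint from every other and the induction terminates.

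The main obstacle I anticipate is not the existence of the replacement but its \emph{termination}: I must verify that rewriting $F_1 + F_2$ as disjoint fundamental pieces genuinely reduces the chosen complexity measure rather than merely shuffling intersections around, and in particular that the new summands $D_j$ do not create fresh intersections with $F_3, \ldots, F_k$ that outpace the intersections removed. To handle this cleanly I would argue at the level of normal coordinates and Euler characteristic: the quadrilateral types appearing in the $D_j$ are constrained to those already present in $F_1 + F_2$ (regular exchange does not introduce new quadrilateral types), so local compatibility with the remaining $F_i$ is preserved, and the additivity of normal coordinates under the Haken sum guarantees the replacement is coordinate-consistent. An alternative, and perhaps cleaner, route is a direct induction purely on the coordinate weight $\|\mathbf{N}\|_1$: write $N = F + N'$ with $F$ fundamental and $\|\mathbf{N'}\|_1 < \|\mathbf{N}\|_1$, apply the inductive hypothesis to $N'$ to get it as a disjoint sum, then apply Observation \ref{obs} pairwise to resolve the intersections between $F$ and each disjoint summand of $N'$. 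Either way, the crux is ensuring the finitely many applications of Observation \ref{obs} compose to a globally disjoint decomposition, which is exactly the gap that the observation --- special to this particular gadget $G$ and checkable in \emph{Regina} --- is designed to fill.
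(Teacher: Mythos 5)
Your overall strategy coincides with the paper's: decompose the surface into fundamental summands, repeatedly apply Observation~\ref{obs} to an intersecting pair to replace it by pairwise disjoint fundamental surfaces, and argue that the total number of intersection curves among all summands strictly decreases until it reaches zero. You also correctly isolate the crux, namely that the disjoint replacements $D_1,\dots,D_m$ must not create new intersections with the remaining summands $F_3,\dots,F_k$.

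However, the justification you offer for that crux does not work. You propose to argue at the level of normal coordinates: the $D_j$ introduce no new quadrilateral types, so local compatibility with the remaining $F_i$ is preserved. But local compatibility is a purely coordinate-level condition --- it says two surfaces \emph{can} be isotoped off each other within each single tetrahedron --- and it neither implies disjointness nor controls the number of geometric intersection curves between the actual embedded representatives, which is exactly the quantity your complexity measure counts. Two locally compatible normal surfaces can intersect in many curves. The step is closed geometrically, not coordinate-theoretically: after pushing the summands into general position so that no two intersection curves meet (no triple points), the regular exchange realising $F_1+F_2=\sum_j D_j$ is supported inside small disjoint regular neighbourhoods $N(\gamma)$ of the curves $\gamma\in F_1\cap F_2$, and every curve of $F_1\cap F_j$ or $F_2\cap F_j$ for $j\geq 3$ lies outside $\bigcup_\gamma N(\gamma)$; hence $(F_1\cap F_j)\cup(F_2\cap F_j)=\bigcup_i (D_i\cap F_j)$ as sets of curves, the total intersection count drops by exactly $|F_1\cap F_2|>0$, and the process terminates. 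With that substitution your argument becomes the paper's proof.
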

The approach we take is to express any normal surface as the sum of connected fundamental surfaces, by repeatedly using Observation \ref{obs} on pairs of intersecting surfaces, allowing us to rewrite this surface in a way that has fewer intersections between its summands.
\begin{proof}

    Let $U=\sum_{i=1}^k U_i$ be a normal surface in the gadget, where each $U_i$ is a fundamental normal surface (not necessarily distinct) and consider the set $\Omega=\{U_i:i=1,\cdots,k\}$. Let $I$ represent the total number of intersection curves between these surfaces, noting that surfaces intersect transversely. In particular, we may `push' components of $U$ so that no two intersection curves meet within $G$.

    Now, take two surfaces $A,B\in\Omega$ that are not disjoint. By Observation \ref{obs}, in the Haken sum of $A$ and $B$, we find $A+B=\sum S_i$ where $S_i$ are pairwise disjoint fundamental normal surfaces. Consider some curve $\gamma$ in $A\cap B$. Within a small open regular neighbourhood of $\gamma$, $N(\gamma)$, a regular exchange is performed as in Figure \ref{fig:hakenSum}, such that all intersections resolve to normal discs, as in Figure \ref{fig:hakenNormal}. This decreases the number of intersections $I$ by $1$ for each $\gamma$.

    We have verified that resolving the intersections of $A+B$ does not increase $I$. Now, we must check that any intersections with any additional surfaces are not created.
    
    Take some surface $C\in\Omega$ that intersects $A$ and/or $B$. Each curve in $A\cap C$ or $B\cap C$ is outside of the regions $\bigcup_{\gamma_i\in A\cap B}N(\gamma_i)$, so evaluating $A+B$ does not create any new intersections between $C$ and any component $S_i$. In fact, the sets of intersection curves $(A\cap C)\cup (B\cap C)=\bigcup_{S_i \in A+B} (S_i \cap C)$ are equal. So, $I$ is strictly decreasing.
  
    We may continue this process again for some new pair of intersecting, fundamental surfaces, until $I=0$ and we have a collection of disjoint, fundamental surfaces.
\end{proof}

\begin{theorem}
    Let $T$ be a zero-efficient triangulation of a 3-manifold which is not $\mathbb{S}^3$, $\mathbb{R}P^3$ or $L(3,1)$. Remove some tetrahedron $\tau$ from $T$ and replace it with $G$ according to some permutation $(a,b,c,d)$. Then the resulting triangulation $T':=T_\tau^{abcd(G)}$ is also zero-efficient.%
\end{theorem}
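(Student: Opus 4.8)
The plan is to argue by contradiction: suppose $T'$ is not zero-efficient, so it contains a connected normal $2$-sphere $S'$ that is not vertex linking, and aim to produce either a non-vertex-linking normal $2$-sphere in $T$ (contradicting zero-efficiency of $T$) or an almost normal $2$-sphere in $T$ (forcing $M=\mathbb{S}^3$ by Jaco--Rubinstein, which the hypothesis excludes; the extra exclusions $\mathbb{R}P^3$ and $L(3,1)$ are simply those for which the ambient zero-efficiency framework, and hence the standing assumption that $T$ is zero-efficient, is set up). The central object is the restriction $S'_G$ of $S'$ to the gadget. Since $G\cong B^3$ carries no nonempty closed normal surface (the boundary is a positive functional of the non-negative normal coordinates, so it cannot vanish), every component of $S'_G$ meets $\partial G$. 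By Lemma \ref{fundLemma} we may write $S'_G$ as a disjoint union of fundamental surfaces of $G$, each listed in Table \ref{tab:surfDetails}.

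First I would impose planarity. Because $S'$ is a $2$-sphere, no subsurface of $S'$ can have positive genus, so no summand of $S'_G$ can be a one-punctured torus (\texttt{sf\_0}, \texttt{sf\_3}) or a three-punctured torus (\texttt{sf\_20}). Every summand is therefore planar: a triangular, quadrilateral, or octagonal disc, an annulus, a three-punctured sphere, or the four-punctured sphere. I would then descend the ``single-piece'' summands. Each triangular or quadrilateral disc, octagonal disc, or annulus summand shares its boundary curves and Euler characteristic with a corresponding triangle, quadrilateral, octagon, or annulus piece in $\tau$ — this is exactly how the representatives $G_{t_i}$, $G_{q}$, $G_{O}$, and $G_t^t$ were chosen (Corollary \ref{normalCoords} and the surrounding discussion). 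Replacing each such summand by its counterpart in $\tau$ leaves $S'_{T'\setminus G}=S_{T\setminus\tau}$ untouched and preserves both the boundary curves on $\partial\tau\cong\partial G$ and the Euler characteristic, so the result glues to a closed surface $S$ in $T$ homeomorphic to $S'$, i.e.\ a $2$-sphere. If an octagon or annulus piece is used, $S$ is an almost normal $2$-sphere and Jaco--Rubinstein forces $M=\mathbb{S}^3$, a contradiction; otherwise $S$ is a normal $2$-sphere, so zero-efficiency of $T$ makes it vertex linking, whence $S$ is triangle-only, so the disc summands it came from are triangle discs, making $S'$ triangle-only and hence vertex linking — again a contradiction. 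To run this step I also need at most one octagon/annulus summand, which I would obtain from local compatibility (two distinct octagon types, or an octagon with an incompatible annulus, cannot be simultaneously embedded), a finite check in \emph{Regina}.

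The main obstacle is the multi-punctured sphere summands — the three-punctured spheres (\texttt{sf\_14}, \texttt{sf\_25}, \texttt{sf\_22}, \texttt{sf\_23}, \texttt{sf\_24}, \texttt{sf\_28}) and the four-punctured sphere (\texttt{sf\_21}) — which correspond to no single normal or almost normal piece and create an Euler characteristic discrepancy: capping their boundary circles by discs in $\tau$ raises $\chi$, so the naive descent no longer yields a single sphere. My plan here is to cap off every boundary circle of $S'_G$ by the corresponding disc or annulus piece in $\tau$ anyway, obtaining a closed normal-or-almost-normal surface $S$ in $T$ with $\chi(S)\ge 2$; the surplus forces $S$ to be a disjoint union of closed surfaces. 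I would then apply zero-efficiency of $T$ (and Jaco--Rubinstein for any almost normal component) to each component, and use that a non-vertex-linking sphere must contain a quadrilateral disc somewhere to locate that quadrilateral and contradict the all-triangle structure forced on the components. The precise coordinates, boundary types, and compatibilities of the finitely many fundamental surfaces of $G$ underlying each step are a finite computation carried out in \emph{Regina}.
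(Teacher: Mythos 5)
Your overall strategy---decompose $S'_G$ into disjoint fundamental pieces via Lemma~\ref{fundLemma}, transfer the configuration back into $\tau$, and split into cases according to whether octagons, quadrilaterals, or only triangles survive---is essentially the paper's, but two of your terminal contradictions do not close. First, you invoke Jaco--Rubinstein for \emph{any} almost normal sphere, including one whose exceptional piece is an annulus. Proposition~5.12 of \cite{MR2057531} concerns \emph{octagonal} almost normal spheres; a sphere with a tube does not force $M=\mathbb{S}^3$ (two parallel copies of the vertex link joined by a parallel tube form such a sphere in any one-vertex triangulation). This is exactly why the paper first cuts and caps \emph{every} tube, so that the only exceptional pieces that can survive are octagons, and only then cites that proposition. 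Relatedly, local compatibility does not prevent $S'_G$ from containing two copies of the same annulus summand, so your descended surface need not be almost normal at all.

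Second, and more seriously, your endgame for the multi-punctured-sphere case fails. After capping, every component of $S$ is forced to be a triangle-only, vertex-linking sphere, but this does \emph{not} contradict $S'$ being non-vertex-linking: a piece such as \texttt{sf\_14} carries quadrilaterals \emph{inside} the gadget while having three triangle boundary curves, so all the quadrilaterals of $S'$ may live inside $G$ and be destroyed by your capping, leaving no quadrilateral to ``locate.'' The missing ingredient is the paper's final argument: by Proposition~5.1 of \cite{MR2057531}, $T$ and hence $T'$ are one-vertex triangulations, so the triangle pieces of $S'$ assemble into nested parallel copies of the single vertex link; since the gadget contains no parallel-triangle annuli, each surviving annulus or punctured-sphere summand of $S'_G$ must attach two of its boundary curves to the \emph{same} parallel copy (nesting makes joining distinct copies at non-parallel corners impossible) and therefore adds a handle, so $S'$ has positive genus, contradicting that it is a sphere. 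Without the one-vertex observation and this genus count, the all-triangle case---which is where the real content of the theorem lies---is not handled.
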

\begin{proof}

    According to Proposition 5.1 of \cite{MR2057531}, because $T$ is zero-efficient and not $\mathbb{S}^3$, $\mathbb{R}P^3$ or $L(3,1)$, it is a one-vertex triangulation. As $\tau$ and $G$ have the same number of vertices, $T'$ is a one-vertex triangulation.
    
    So, we suppose that $T$ is zero-efficient, but $T'$ is not. That is, suppose that $T'$ has some non-vertex-linking normal 2-sphere $S$. By Lemma \ref{fundLemma}, the connected components of $S_G$ are all fundamental normal surfaces as described in Table \ref{tab:surfDetails} -- excluding \texttt{sf\_0}, \texttt{sf\_3}, \texttt{sf\_20}, which have genus $1$. Now, consider embedding $S$ into $T$ to form the surface $S'$, so that in tetrahedron $\tau$ we have that $S'_\tau$ is isotopic to $S_G$. This is not necessarily a normal surface, so we attempt to normalise it. As $S'$ is a sphere, any tubed piece within $\tau$ may be cut and `capped' (see Section \ref{appNORMAL}), cutting $S'$ into two spheres. In particular, we find an inner-most tube in $S'_\tau$, and cut and cap it as in Figure \ref{fig:cuttingCapping}.

    \begin{figure}[H]
        \centering
        \includegraphics[width=0.7\textwidth]{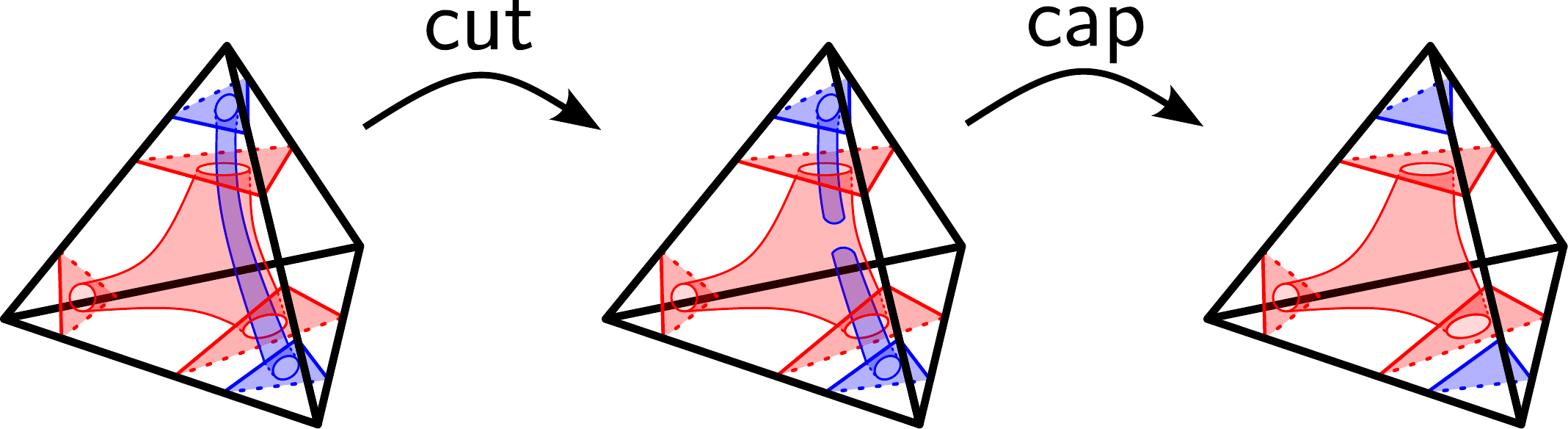}
        \caption{Cutting and then capping an inner-most tube}
        \label{fig:cuttingCapping}
    \end{figure}

    As we repeat this process, we form $\bar{S'}$, a collection of spheres, with $\partial \bar{S'}_\tau =\partial S_G$. The components of $\bar{S'}_\tau$ are triangles, plus possibly either octagons or quadrilaterals (but not both as they are not locally compatible).

    If $\bar{S'}$ has one or more octagon piece, then we normalise all but one octagon by pushing them towards adjacent tetrahedra (see Figure \ref{fig:octagonCompression} in Section \ref{appNORMAL}). Now, since exactly one octagon remains, $\bar{S'}$ has a component that is an almost normal sphere in $T$. From Proposition 5.12 of \cite{MR2057531}, this means $T$ must be $\mathbb{S}^3$ -- but this contradicts our assumptions about $T$.

    Now, if $\bar{S'}$ has quadrilaterals but no octagons, then it has a component that is a normal 2-sphere in $T$ which is not vertex linking. This contradicts $T$ being zero-efficient.

    Finally, consider the case where $\bar{S'}$ has only triangular pieces. As we assumed that $S$ is not vertex linking, then $S_G$ must have had at least one copy of an annulus between triangles. But, as $T'$ is a one-vertex triangulation, this means that $S$ had genus $>1$, as each tube essentially adds a handle to a vertex linking sphere formed from the remaining triangles. But, then $S$ is not a sphere, so we again find a contradiction.

    In each case, we have reached a contradiction to the assumption that $T'$ is not zero-efficient. Hence, we can conclude that the gadget preserves zero-efficiency.

\end{proof}
Now, we address the parallel annulus types that are not captured by the gadget, and show why this exceptional case is harmless for algorithms such as computing Heegaard genus.
\begin{observation}
    Suppose $S$ is an almost normal surface in some closed triangulation $T$. If the almost normal piece in $S$ is an annulus in tetrahedron $\tau$, then on any face of $\tau$ that intersects the annulus twice, the annulus may be isotoped (`pushed across') to the adjacent tetrahedron.
\end{observation}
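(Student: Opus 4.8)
The statement to prove is an \emph{Observation} asserting that an annulus almost normal piece can be pushed across any face it meets twice. The plan is to analyse the local picture of an annulus piece intersected with the boundary of its tetrahedron $\tau$. First I would recall that an annulus piece connects two normal discs (\texttt{tri\_a:tri\_b}, \texttt{tri\_e:quad\_ab/cd}, etc.) via an unknotted tube, so its boundary consists of two disjoint simple closed curves sitting on $\partial\tau$, each matching the normal-arc pattern of the disc it caps. I would fix notation for a face $\tau^\Delta_{ijk}$ that the annulus meets in exactly two arcs, and set up the adjacency $\tau^\Delta_{ijk}=\bar\tau^\Delta_{pqr}$ to the neighbouring tetrahedron $\bar\tau$ established in the Preliminaries.

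The heart of the argument is a concrete isotopy. The key steps, in order, are: (i) identify the two arcs $\alpha_1,\alpha_2$ in which the annulus meets the chosen face, and observe that together they bound (with subarcs of the surface and of the face) a disc region of the annulus lying in $\tau$ that is incident to this face; (ii) exhibit an isotopy of $S$, supported in a neighbourhood of that face, that carries this portion of the annulus across $\tau^\Delta_{ijk}$ into $\bar\tau$, decreasing the number of intersection curves of the almost normal piece with $\tau$'s boundary on that face from two to zero; and (iii) verify that after the push the result is again a legitimate almost normal surface --- i.e. the pushed pieces resolve into normal triangles/quadrilaterals in $\bar\tau$ together with the (now relocated) annulus, just as in the octagon-pulling move of Figure~\ref{fig:octagonCompression} and the normalisation discussion of Section~\ref{appNORMAL}. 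Essentially this is the ``Pulling'' normalisation move of Section~\ref{appNORMAL} applied to an annulus rather than an octagon, so I would lean on that framework rather than reconstructing the isotopy from scratch.

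The main obstacle I anticipate is a careful case check that the hypothesis ``the face meets the annulus \emph{twice}'' is exactly what makes the two boundary arcs isotopic across the edge without obstruction, and that the push genuinely reduces boundary intersections rather than merely relocating them within $\tau$. Concretely, one must confirm that the two arcs on $\tau^\Delta_{ijk}$ are \emph{parallel} (cut off a bigon-like band of the annulus) so that the annular tube can be slid wholesale across the face; if the face met the annulus in a single arc or in a non-parallel configuration the move would fail or would alter the piece's type. I would therefore enumerate the annulus types from the list in the Additional Notation subsection (\texttt{tri\_a:tri\_b} with $a\neq b$, \texttt{tri\_e:quad\_ab/cd}, and the parallel cases) and check, type by type, which faces are met twice and confirm the parallel-band structure in each; the parallel-annulus types are precisely the ones relevant here, which dovetails with the earlier remark that parallel annuli are absent from $G$. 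Finally I would note that since $T$ is closed, $\bar\tau$ always exists, so the push is always available, completing the argument.
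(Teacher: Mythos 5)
Your steps (i)--(iii) capture the right idea, and they are essentially all the paper relies on: the observation is stated without a formal proof, justified only by the illustrative Figure \ref{fig:pushOff2} and by analogy with the `pulling' normalisation move of Section \ref{appNORMAL}. The local isotopy you describe --- slide the tube of the annulus through the region of $\tau$ lying between its two boundary discs, across the band of the face $\tau^\Delta_{ijk}$ cobounded by the two arcs $\alpha_1,\alpha_2$, and into $\bar{\tau}$ --- is exactly what that figure depicts. The one point genuinely worth verifying, which you do not quite isolate, is that this band is free of other sheets of $S$; this is automatic, because the tube already runs through the corresponding region of the interior of $\tau$, so embeddedness of $S$ forbids any other normal disc of $S$ from separating the two arcs on that face.

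However, your final paragraph contains a real error that would derail the verification. You claim one must confirm that the two arcs on $\tau^\Delta_{ijk}$ are parallel, that a non-parallel configuration would make the move fail, and that ``the parallel-annulus types are precisely the ones relevant here.'' None of this is right. The observation applies to every annulus type, and the push works whether or not $\alpha_1$ and $\alpha_2$ are parallel normal arcs: parallelism affects only the \emph{type} of annulus that appears in $\bar{\tau}$ afterwards, not whether the isotopy exists. This is exactly what the paper states immediately after the observation (``Whether pairs of boundary curves constrained to the shared face are parallel determines what annulus piece may appear in this adjacent tetrahedron''), and its own example in Figure \ref{fig:pushOff2} pushes a triangle--quadrilateral (hence non-parallel) annulus whose two arcs on the shared face are of different normal arc types. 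Restricting to parallel-type annuli would also break the downstream application in Lemma \ref{parallelLemma}, where the whole point is to keep pushing until the annulus \emph{becomes} non-parallel; changing the piece's type under the push is a feature, not a failure. Drop the parallelism requirement and the rest of your outline stands.
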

For example, consider such a case in Figure \ref{fig:pushOff2}.

\begin{figure}[H]
    \centering
    \includegraphics[width=0.7\textwidth]{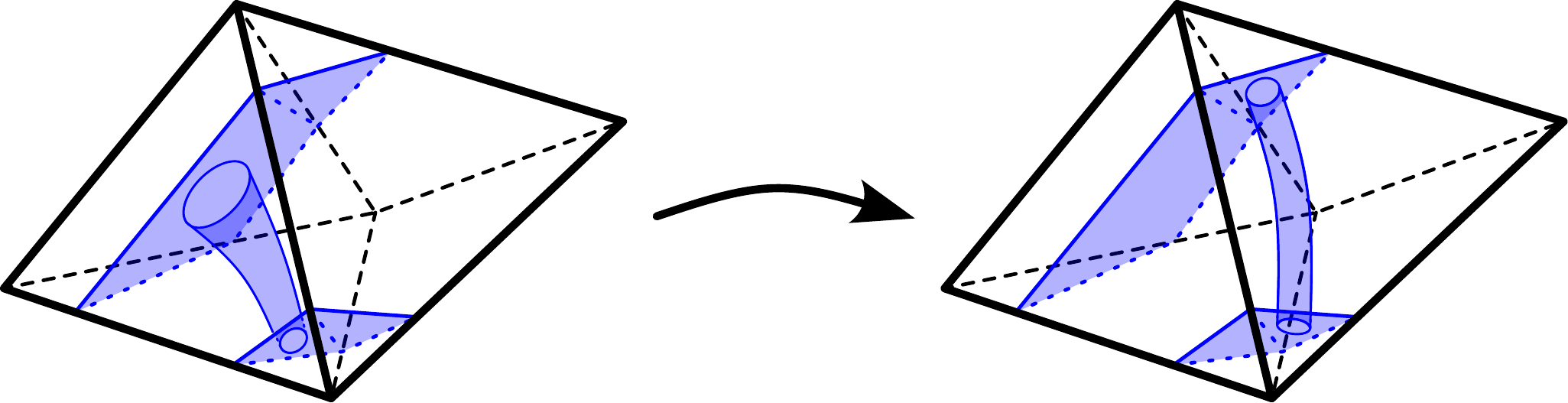}
    \caption{An annulus piece between a triangle and quadrilateral is pushed into a neighbouring tetrahedron, as an annulus between two non-parallel triangles.}
    \label{fig:pushOff2}
\end{figure}
The normal discs in the adjacent tetrahedron need not be the same type, as in Figure \ref{fig:pushOff2}.

Whether pairs of boundary curves constrained to the shared face are parallel determines what annulus piece may appear in this adjacent tetrahedron. Which face the tube is pushed towards determines which annulus may appear in the adjacent tetrahedron.

\begin{lemma}\label{parallelLemma}
    Let $S$ be a connected, almost normal surface in a closed, orientable triangulation $T$ (which is not the 3-sphere). Suppose it has an annulus piece of parallel triangle or quadrilateral type in some tetrahedron $\tau$. Then, one of the following holds: 
    \begin{enumerate}
        \item $S$ is isotopic to an almost normal surface $S'$ with a non-parallel boundary type annulus in some (possibly different) tetrahedron $\tau'$; \label{enum1}
        \item $S$ does not represent a minimal Heegaard splitting; \label{enum2}
        \item $S$ is the orientable double cover of some non-orientable surface $U_{no}$ with a tube connecting the two sheets that cover some local region of $U_{no}$. \label{enum3}
    \end{enumerate}
\end{lemma}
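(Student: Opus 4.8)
The plan is to realise the parallel annulus as an unknotted tube joining two locally parallel sheets of $S$, and then to chase that tube through the triangulation. A parallel triangle (resp.\ quadrilateral) annulus in $\tau$ is an annulus whose two boundary curves are parallel normal curves of the same type; it is isotopic to two parallel normal discs $D,D'$ of that type joined by a small tube. The discs $D,D'$ are the local ends of two sheets of $S$ that cobound a product region inside $\tau$, and the preceding observation lets us push the annulus across any face it meets twice, i.e.\ slide the tube within the region where these two sheets stay parallel. I would first make this precise by defining $W$ to be the maximal connected sub-region of $T$ through which the two sheets run parallel and cobound a product ($I$-bundle) reachable from the tube by such slides; by construction $W$ is an $I$-bundle over a compact surface $F_0$, and the tube is a trivial $1$-handle joining its two boundary sheets.

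The argument then splits on whether $F_0$ has boundary. If $\partial F_0 \neq \emptyset$, then the frontier of $W$ must contain a tetrahedron in which the two sheets meet the boundary in \emph{non-parallel} normal discs, since (as $T$ is closed and $S$ is embedded) the parallel region cannot run off a boundary or have the two sheets coincide, so it can only terminate where parallelism breaks. Sliding the tube to that tetrahedron and invoking the preceding observation converts the annulus into a non-parallel triangle, triangle--quadrilateral, or similar annulus, which is precisely conclusion \ref{enum1}. The bulk of the work here is verifying that a maximal parallel region with nonempty frontier really does expose a non-parallel disc pair in an adjacent tetrahedron, and that pushing there keeps the surface almost normal and isotopic to $S$.

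If instead $F_0$ is closed, then $W$ is an $I$-bundle over a closed surface and $S$ is recovered from $\partial W$ by the single connecting tube, whose meridian is a compressing disc for $S$. I would read off the structure from the bundle type. If the bundle is the trivial product $F_0 \times I$, then $\partial W$ is two parallel copies of $F_0$ and $S$ is these two copies joined by the tube; compressing along the meridian then strictly lowers the genus and disconnects the surface, so $S$ cannot be of minimal genus -- if it bounds handlebodies it is a stabilised or reducible Heegaard surface, and otherwise it is not a Heegaard surface at all, either way giving conclusion \ref{enum2}. If the bundle is twisted, then $\partial W$ is connected and is exactly the orientable double cover of the non-orientable base $U_{no}$, with the tube joining the two sheets covering a local region of $U_{no}$, which is conclusion \ref{enum3}.

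The main obstacle I expect is the dichotomy in the second and third paragraphs: rigorously establishing that the maximal parallel product region either has a frontier yielding a non-parallel annulus or closes up into a genuine $I$-bundle over a closed surface, and carrying the analysis uniformly for both parallel triangle and parallel quadrilateral annuli. A secondary subtlety is the orientability split between conclusions \ref{enum2} and \ref{enum3}: one must use that $S$ is orientable (since $T$ is orientable) to show that a product bundle can only arise over an orientable base -- forcing the reducible, non-minimal case -- while a twisted bundle over a non-orientable base is exactly the one whose connected boundary is the double cover. The hypotheses that $T$ is closed and not $\mathbb{S}^3$, and that $S$ is connected, are what guarantee that the tube genuinely joins two sheets of a single surface and that the closed-up $I$-bundle analysis is not vacuous.
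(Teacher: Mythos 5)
Your overall skeleton matches the paper's: push the parallel tube from tetrahedron to tetrahedron until a non-parallel annulus appears (Case 1), and if this never happens conclude that $S$ consists of two parallel sheets of a surface $U$ joined by a tube, splitting into the product case ($U$ orientable, Case 2) and the twisted $I$-bundle case ($U$ non-orientable, Case 3). Your $I$-bundle formalisation of the maximal parallel region is a more careful packaging of the paper's informal ``keep pushing until parallelism breaks'' step, and your orientable/non-orientable dichotomy for the closed-up region is exactly the paper's.

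The gap is in how you close out Case 2. You assert that because the meridian of the tube is a compressing disc whose compression disconnects $S$ and lowers its genus, $S$ ``is a stabilised or reducible Heegaard surface'' whenever it bounds handlebodies. That inference is not justified: stabilisation and reducibility each require a matching essential disc on the \emph{other} side of $S$ (meeting the meridian in one point, respectively sharing its boundary), and a single separating compressing disc on one side exists for every handlebody of genus at least $2$, so on its own it implies nothing about minimality. The paper argues directly and splits on whether $U_o$ separates $T$. If $U_o$ is separating, it cuts $T$ into $X$ and $Y$; the side of $S$ away from the product region is $X$ joined to $Y$ by the tube, and if this is a genus-$g$ handlebody then cutting it along the separating co-core disc of the tube exhibits $X$ and $Y$ as handlebodies each bounded by $U_o$, so $U_o$ is itself a Heegaard surface of genus $g/2 < g$ and $S$ is not minimal. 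If $U_o$ is non-separating, the co-core disc is properly embedded in the complementary piece and separates its boundary without separating the piece itself, which cannot happen in a handlebody, so $S$ is not a Heegaard surface at all. Your hedge ``otherwise it is not a Heegaard surface at all'' gestures at this second subcase, but you would need the explicit separating/non-separating split, and the exhibited half-genus splitting in the separating subcase, to actually obtain Case 2.
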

\begin{proof}

    Suppose we have a surface $S$ with an annulus of either parallel-triangle or parallel-quadrilateral type. We attempt to push the tube from tetrahedron to tetrahedron until reaching a non-parallel annulus type. If successful, this proves Case \ref{enum1}.

    If a non-parallel piece cannot be found, then our surface must either be two parallel copies of an orientable surface $U_{o}$, or the orientable double cover of a non-orientable surface $U_{no}$, in both cases with a tube connecting its two sheets. The non-orientable case is simply Case \ref{enum3}, which will be discussed after the proof.
    
    Suppose that $S$ realises a Heegaard splitting of genus $g$ and therefore $S$ has Euler characteristic $\chi(S)=2-2g$.

    If $S=U_o\# U_o$ where $U_o$ is orientable and separating, then $U_o$ has Euler characteristic $\chi(U_o)=2-g$ and $U_o$ splits the triangulation into 3-manifolds $X$ and $Y$ where $\partial X=\partial Y=U_o$. Refer to Figure \ref{fig:separatedRegions}. Now, consider the 3-manifold $Z$ bounded by $S=U_o\# U_o$, wedged between the two copies of $U_o$. As $\chi(U_o\# U_o)=\chi(S)=2-2g$ for the Heegaard splitting $S$, $Z$ has genus $g$. To the other side of $S$ is the 3-manifold $X\asymp Y$, which is $X$ connected to $Y$ by a solid tube. As $S$ is a Heegaard splitting, $X\asymp Y$ must also be a handlebody of genus $g$. But, then $X$ and $Y$ are both handlebodies of genus $\frac{g}{2}$ (because they are both bounded individually by the genus $g$ surface $U_o$), so $U_o$ is a Heegaard splitting of genus $\frac{g}{2}$. Hence, $S$ is not a minimal Heegaard splitting (unless $g=0$, but $T$ is not the 3-sphere).

    \begin{figure}[H]
        \centering
        \includegraphics[width=\textwidth]{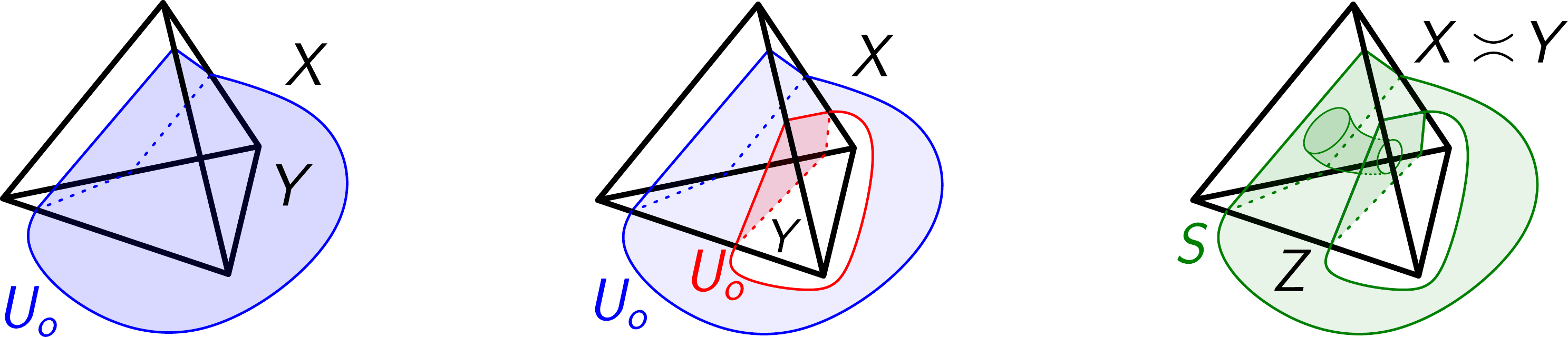}
        \caption{An orientable surface $U_o$ divides the 3-manifold into $X$ and $Y$. Two copies of $U_o$, tubed together, form $S=U_o\# U_o$ which divide the 3-manifold into $Z$ and $X\asymp Y$.}
        \label{fig:separatedRegions}
    \end{figure}

    Finally, if $S=U_o\#U_o$ but $U_o$ is non-separating, then cutting $T$ along $S$ yields a 3-manifold $X$ with a properly embedded disc (slicing through the tube) that separates $\partial X$ but not $X$. Therefore $X$ is not a handlebody and $S$ is not a Heegaard splitting, giving us Case~2.
 
\end{proof}
    Consider Case \ref{enum3} of the previous lemma. We abuse notation and let $S=U_{no}\asymp U_{no}$ represent the orientable double cover of $U_{no}$ with a tube connecting its two sheets. Cutting the triangulation $T$ along $U_{no}$ yields the 3-manifold $X$ with $\partial X=S$. Since $S$ cuts $T$ into two handlebodies of genus $g$, it follows that $X$ is just one of these handlebodies with one of its handles cut. That is, $X$ is a handlebody with genus $g-1$. Such a case can be detected by searching for surfaces with Euler characteristic $\chi(U_{no})=\frac{1}{2}\chi(S)+1$ which cut the triangulation into one genus $g-1$ handlebody.

\begin{corollary}\label{cor37}

    For an almost normal surface $S$ that realises a minimal Heegaard splitting of a triangulation $T$, one of the following holds:
    \begin{enumerate}
        \item There exists a gluing of the gadget into $T$ to form $T'=T_\tau^{abcd(G)}$ for some permutation $(a,b,c,d)$ and some tetrahedron $\tau$, and there exists a normal surface $S'$ in $T'$ that is isotopic to $S$;
        \item $S$ is of the type of Case \ref{enum3} in Lemma \ref{parallelLemma}.
    \end{enumerate}
    
\end{corollary}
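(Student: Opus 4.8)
The plan is to split on the type of the unique almost normal piece of $S$. Since $S$ is almost normal, there is exactly one tetrahedron $\tau$ in which $S$ meets the tetrahedron in triangles and quadrilaterals together with a single octagon or annulus, while everywhere else $S$ is normal. I would therefore distinguish three cases according to whether this distinguished piece is an octagon, a non-parallel annulus, or a parallel annulus. The first two cases should be resolved directly by the gadget, whereas the third is precisely the situation the gadget cannot capture and is where Lemma \ref{parallelLemma} has to do the work.

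For the octagon and non-parallel annulus cases I would argue that some permutation $(a,b,c,d)$ places the distinguished piece among the gadget's chosen representatives. The gadget carries two octagon surfaces ($G_{O_{02/13}}, G_{O_{03/12}}$), four non-parallel triangle–triangle annuli, and two triangle–quadrilateral annuli. Relabelling the vertices of $\tau$ induces an $S_4$-action on the set of piece types, and a short orbit computation shows that the orbit of a single octagon representative already exhausts all three octagon types, the orbit of a single triangle–triangle annulus exhausts all six such annuli, and the orbit of a single triangle–quadrilateral annulus exhausts all twelve such annuli. Fixing a permutation that carries the distinguished piece of $S$ to one of the gadget's representatives, I would then invoke Corollary \ref{normalCoords} to transport the triangles and quadrilaterals of $S$ in $\tau$ to their gadget counterparts under that same permutation, and combine them with the chosen octagon or annulus surface. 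Because the gadget's representatives were selected so that their local compatibility mirrors that of the original (almost) normal pieces, the resulting pieces in $G$ assemble into an embedded normal surface $S'$ in $T'=T_\tau^{abcd(G)}$ isotopic to $S$, which is Case~1.

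For a parallel annulus I would appeal to Lemma \ref{parallelLemma}, whose hypotheses are met since a Heegaard surface is connected and $T$ is not the $3$-sphere. Because $S$ realises a minimal Heegaard splitting, Case \ref{enum2} of that lemma is excluded outright. If Case \ref{enum1} holds, then $S$ is isotopic to an almost normal surface carrying a non-parallel annulus in some tetrahedron $\tau'$; applying the argument of the previous paragraph to that surface and composing the isotopies then yields Case~1 of the corollary. If instead Case \ref{enum3} holds, this is exactly Case~2, and we are done.

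The main obstacle I anticipate is the parallel annulus case, which is genuinely beyond the reach of the gadget; the whole purpose of Lemma \ref{parallelLemma} is to show that, under minimality, this degeneracy either isotopes into a non-parallel annulus or is the single unavoidable exception (the tubed double cover). The remaining technical care lies in the symmetry bookkeeping: I must verify that the octagon and annulus representatives in $G$ really do exhaust all octagon and non-parallel annulus types under the $S_4$-action, and that the quadrilateral representatives fixed earlier (for instance the choice of \texttt{sf\_2} over \texttt{sf\_12}) remain compatible with whichever octagon or annulus is selected, so that Corollary \ref{normalCoords} can be applied simultaneously to every piece of $S$ in $\tau$.
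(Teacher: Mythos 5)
Your proposal is correct and follows essentially the same route as the paper: a case split on octagon / non-parallel annulus / parallel annulus, permuting the gadget so that one of its representative pieces lands in the required position and invoking Corollary \ref{normalCoords} for the remaining normal pieces, with Lemma \ref{parallelLemma} (and minimality to exclude its Case \ref{enum2}) handling the parallel case. The only small discrepancy is that the paper restricts to the twelve even (orientation-preserving) permutations rather than the full $S_4$-action you describe, but the orbit counts you need still hold under that smaller group, so the argument goes through unchanged.
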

\begin{proof}
    
    Suppose that $S$ is an almost normal surface in a triangulation $T$, with its almost normal piece in tetrahedron $\tau$.

    There are twelve rotational symmetries of a tetrahedron (twelve even permutations of its vertices), and hence twelve possible triangulations that may be formed for each tetrahedron.

    We may transform the coordinates of the normal pieces as in Corollary \ref{normalCoords}.

    If the almost normal piece of $S$ is an annulus between parallel triangles or quadrilaterals, then we attempt to push this tube across the tetrahedra until reaching a non-parallel annulus piece, and apply the gadget accordingly. If this is unsuccessful, then $S$ must be the orientable double cover of some non-orientable normal surface $U_{no}$ with a tube connecting its two sheets, as in Case \ref{enum3} of Lemma \ref{parallelLemma}. Then, to determine if $S$ is a genus $g$ Heegaard splitting, simply cut along $U_{no}$ and check, via the algorithm for handlebody recognition \cite{MR4604006}, whether the resulting manifold is a genus $g-1$ handlebody.

    Now, if the almost normal piece of $S$ is an octagon or a non-parallel annulus piece, then we seek to construct a surface in $T_\tau^{abcd(G)}$ for some permutation of the vertices of $\tau$ into $G$, $(0,1,2,3)\mapsto(a,b,c,d)$. 

    As the gadget has two distinct triangle-quadrilateral annulus pieces of a total possible twelve, each permutation of the gadget can represent two distinct annuli of this type. Hence, we only need to try six possible permutations. We choose the following,

    \[(0,1,2,3),\quad(0,3,1,2),\quad (1,0,3,2),\quad(1,2,0,3),\quad(2,0,1,3),\quad(3,1,0,2).\]
    This choice of permutation allows us to `rotate' the required octagon or annulus piece of $G$ into the same position as the one in $\tau$. Refer to Appendix \ref{appCONSTRUCTION} for the specific choices of individual almost normal pieces.

\end{proof}

\subsection{Implementations}

In practice, as there is no current implementation of annulus pieces in \textit{Regina}, we cannot directly normalise an almost normal surface using the gadget. Instead, we need to consider all \textit{normal} surfaces from all possible triangulations resulting from gluing the gadget into any tetrahedron.

With the gadget, Heegaard splittings can now always be represented by normal surfaces. However, these surfaces are not necessarily fundamental. Therefore we use the gadget to generate a set of candidate surfaces for each potential value of genus; this set will be finite, due to Euler characteristic being additive under Haken sum. We discuss this process further in Section \ref{heeeeSec}.

\subsubsection{Method 1: Generating All Surfaces}\label{meth1Head}

For each tetrahedron $\tau$ in a triangulation $T$, form $T_\tau ^{abcd(G)}$ for the six distinct permutations (see the proof of Corollary \ref{cor37}), and then generate all fundamental normal surfaces in it. For a triangulation with $n$ tetrahedra, this means $6n$ sets of normal surfaces must be enumerated. This represents almost normal surfaces that may be embedded in $T$, as well as many others (which use surfaces of the gadget that are not tubes, octagons, triangles or quadrilaterals).

Denote the running time for generating normal surfaces for a triangulation with $n$ tetrahedra by $T(n)$, and let $C(n)$ represent the number of normal surfaces for such a triangulation. From Section \ref{runningtimeseccie}, $T(n)\in\mathcal{O}(1.934^n)$ and $C(n)\in\mathcal{O}(1.503^n)$. Suppose that the running time of the algorithm to check if a surface is a Heegaard splitting has running time $f(n)$. Then, to generate and test each surface in the $6n$ triangulations, we find a running time on the order of $M_1(n)=6n(T(n+4)+C(n+4)f(n))=6n(1.934^{n+4}+1.503^{n+4}f(n))$.

\subsubsection{Method 2: Constructing Each Annulus}\label{meth2Head}
Alternatively, we have developed a method to form the necessary set of almost normal surfaces using an existing set of fundamental normal surfaces for the base triangulation.

Let $\Omega$ be the set of fundamental normal surfaces for a triangulation $T$. For each surface $S\in\Omega$, we form a set of almost normal surfaces with an annulus piece between triangles and/or quadrilaterals in $S$ as follows. For each tetrahedron $\tau$, if $S_\tau$ has at least two triangles on different vertices, then we consider the surface with an annulus between two of these triangles. Similarly, if $S_\tau$ has at least one triangle and at least one quadrilateral, we consider the surface with an annulus between these. In either case, we determine which annulus type from the gadget is necessary, and under which permutation, and construct the new surface within $T'=T_\tau^{abcd(G)}$. As we begin with a set of normal surfaces (that is, they are not already `tubed'), when we form the equivalent surface in $T'$, we must add the coordinates for the annulus piece to the coordinates for each triangle or quadrilateral excluding those corresponding to the boundaries of the annulus. 

For example, in Figure \ref{fig:howToTubeTemp}, $S_\tau$ has coordinates $(1,0,0,2,0,0,1)$ and supports several annulus types, such as one connecting a triangle to its quadrilateral. Using the identity permutation, we find that $G_{t_3}^{q_{03/12}}$ gives us the annulus between \texttt{tri\_3:quad\_03/12}. Then, the remaining triangle pieces are represented using $G_{t_0}$ and $G_{t_3}$.
\begin{figure}[H]
    \centering
    \includegraphics[width=\textwidth]{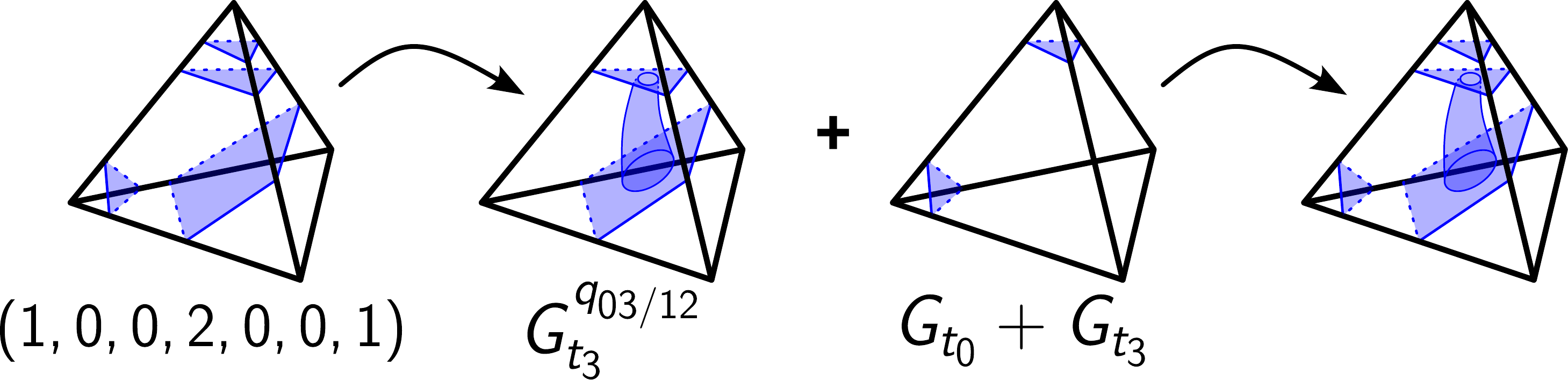}
    \caption{A surface with coordinates $(1,0,0,2,0,0,1)$ in a tetrahedron, and its `tubing'.}
    \label{fig:howToTubeTemp}
\end{figure}
A complete guide to our choice of permutations and coordinates is given in Appendix \ref{appCONSTRUCTION}.

Now, using the same results from Section \ref{runningtimeseccie}, the running time for this method is on the order of $M_2(n)=T(n)+C(n)*6n*f(n)=1.934^n+6n\times1.503^n f(n)$, as there are at most six annulus types that could exist in any given tetrahedron.

\paragraph*{Performance Comparison}
In our discussion of the two methods, we found $M_1(n)\in\mathcal{O}(6n(1.934^{n+4}+1.503^{n+4}f(n)))$ and $M_2(n)\in\mathcal{O}(1.934^n+6n\times1.503^n f(n))$, where $f(n)$ is the time to test if a given surface is a Heegaard splitting.
Asymptotically $M_2(n)\sim 6n\times1.934^4\times M_1(n)$, and so we expect method 2 to be significantly faster than method 1.

\paragraph*{Generating \textit{All} Surfaces with Method 2}
In Method 2 (Section \ref{meth2Head}), we described how a non-parallel annulus of an almost normal surface can be converted into a normal surface using the gadget. Here, we justify that this method produces all the candidate almost normal surfaces that may represent a minimal Heegaard splitting. First, suppose that almost normal surfaces with octagon pieces have been considered already (i.e. have been tested as Heegaard splittings). Now, any almost normal surface with an annulus piece has such an annulus `tubing' together two triangles and/or quadrilaterals. 

Suppose $S$ is an orientable fundamental almost normal surface with Euler characteristic $\chi(S)<0$, with a non-parallel annulus in some tetrahedron $\tau$. Let $\bar{S}$ be the surface generated by cutting and capping the annulus. Of course, $\chi(\bar{S})=\chi(S)+2$ and $\bar{S}$ is a normal surface, but is not necessarily fundamental, so $\bar{S}=\sum_{i} A_i$ for some fundamental normal surfaces $A_i$. The annulus of $S$ either connects two components in $\tau$ of a particular $A_i$, or connects two components between $A_j$ and $A_k$ for $j\neq k$. We can generate all tubings of $A_i$ to itself by constructing tubes as in Section \ref{meth2Head}. Similarly, we can generate tubings of $A_j+A_k$.

Now, if $S$ has a parallel annulus piece, then either it is of the type of Case \ref{enum3} in Lemma \ref{parallelLemma}, or $S$ will not represent a minimal Heegaard splitting, or it is isotopic to some $S'$ with a non-parallel annulus.

In practice, this results in the following method for detecting a splitting of genus $g$. 
\begin{enumerate}
    \item Generate the set $\Omega_{AN}$ of all fundamental almost normal surfaces of a triangulation $T$ in \textit{Regina}, test if those with genus $g$ are Heegaard splittings. If not, generate the set of fundamental normal surfaces, $\Omega_N$.
    \item For each tetrahedron, for each orientable surface of genus $g-1$ in $\Omega_N$, form all tubings.
    \item For each tetrahedron, for each set of locally compatible surfaces in $\Omega_N$ whose Euler characteristics sum to $2-2(g-1)$, form all tubings.
    \item Test all tubed surfaces as Heegaard splittings.
\end{enumerate}
Note that constructing a tube increases the Euler characteristic of a surface by 2, which is equivalent to increasing the genus of a connected surface by 1.

\section{Computing Heegaard Genus}\label{heeeeSec}
As discussed in Section \ref{heegen}, a result of Rubinstein declares that Heegaard splittings are given by $S=S_{an}+S_t$ where $S_{an}$ is almost normal, $S_t$ is normal, $\chi(S_{an})<0$ and $\chi(S_t)=0$ \cite{MR1470718}. So, to determine if a triangulation has a genus $g$ Heegaard splitting, all such surfaces where $\chi(S_{an})=2-2g$ must be tested as splittings. Furthermore, we know that every summand of $S_{an}$ must have negative Euler characteristic, and every summand of $S_t$ has zero Euler characteristic. Hence, there are finitely many cases to consider of $S_{an}$ as their Euler characteristic must represent an integer partition of $2-2g$. We can generate these using normal and almost normal surfaces in \textit{Regina} with octagon pieces, or by constructing tubed surfaces as in Section \ref{meth2Head}.

For the torus piece $S_t$, as Euler characteristic is additive under Haken sum, this could theoretically require an unbounded number of different sums of tori. However, from Lemma 5.1 of \cite{MR4252195}, we know that if $S$ is a normal surface and $A$ is an edge-linking torus, then $S+A$ is either disconnected or is isotopic to $S$, so need not be considered in summands of $S_t$.
Tentatively and naively, we therefore consider splittings as $S=S_{an}$ only.
If there are non-edge-linking tori present, this means we may not see our splitting, and so in such settings we can only form an upper bound on the genus.
Experimentally, edge-linking tori do form the majority of tori in the cases we have tested, and in those cases that remain, we can attempt to form a lower bound using algebraic techniques as in Section \ref{heegen}.

We have tested this approach on all orientable triangulations in the closed hyperbolic census of Hodgson and Weeks (in \cite{MR1341719}) -- all of which are zero-efficient. We first search for a genus 2 splitting, and if one does not exist, search for a genus 3 splitting (then 4, etc). Of the 3,000 triangulations in this census, 44 had $\mathrm{rank}(H_1(M))=3$, and genus 3 splittings were found. Then, 2,661 had $\mathrm{rank}(H_1(M))\leq2$, and genus 2 splittings were found. For each of the remaining 295, an exhaustive search of genus 2 surfaces (without tori) found no splittings, but genus 3 splittings were found. In these 295 cases, we cannot guarantee they have genus 3, but merely provide an informed upper bound.

\bibliography{the_extra_b_is_for}

\appendix

\section{Running Time Computations}\label{appTIMES}
Of the first 3000 triangulations in the \textit{Hodgson-Weeks Closed Hyperbolic Census} in \textit{Regina}, fundamental normal surfaces were generated for the 2974 triangulations with at most 23 tetrahedra. The number of surfaces and enumeration time were recorded. These tests were run on an 8-core CPU with 16GB RAM.

Using \texttt{curve\_fit} in \textit{SciPy} \cite{2020SciPy-NMeth}, we fit linear models to $(n,\log_{10}(T))$ and $(n,\log_{10}(S)$ where $n$ is the number of tetrahedra, $T$ is the running time and $S$ is the number of normal surfaces. We found $\log_{10}(T)=0.2866n-4.2309$ with $R^2=0.8363$ for the enumeration time, and $\log_{10}(S)=0.1771n-0.3887$ with $R^2=0.8384$ for the number of surfaces. Equivalently, $T\approx5.9\times10^{-5} 1.934^n$ and $S\approx 0.41\times 1.503^n$. By this, we may estimate that $T(n)\in\mathcal{O}(1.934^n)$ and $C(n)\in\mathcal{O}(1.503^n)$.

\begin{figure}[H]
    \centering
    \includegraphics[width=0.45\textwidth]{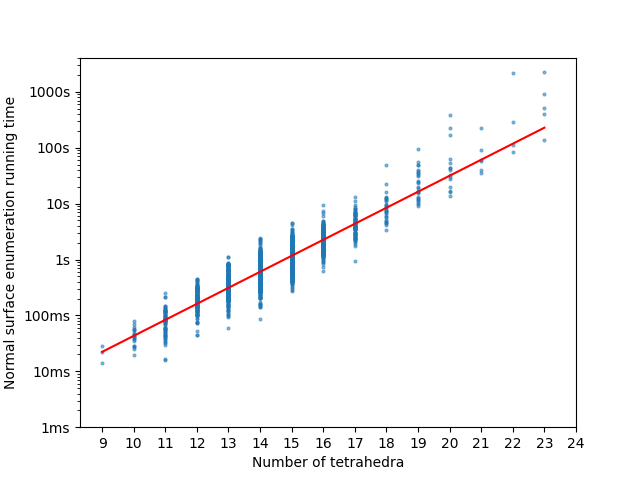} \includegraphics[width=0.45\textwidth]{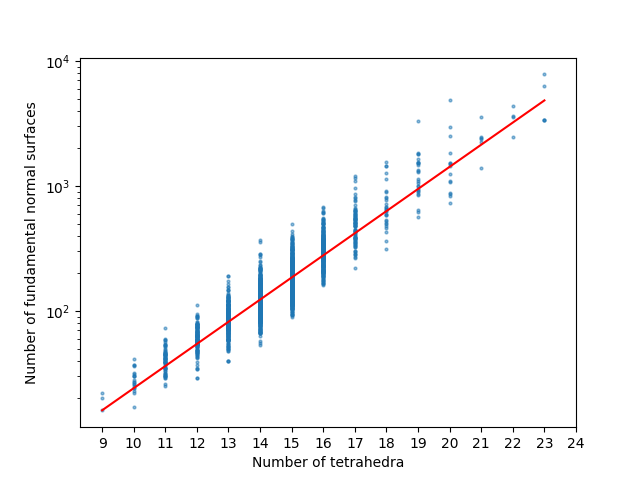}
    \caption{Data plotted with the exponential model for the enumeration time (left) and number of surfaces (right).}
    \label{fig:dataGraphs}
\end{figure}

\section{Fundamental Normal Surfaces of the Gadget}\label{appSURFTABLE}
The coordinates of the surfaces introduced in Table \ref{tab:surfDetails} are detailed below. The coordinates and order of the surfaces are as produced in version 7.3 of \textit{Regina}, for the triangulation with isomorphism signature \texttt{`fHLMabddeaaaa'}. To replicate these surfaces in \textit{Regina}, use the command \\ \texttt{NormalSurfaces(Triangulation3(`fHLMabddeaaaa'),\,NS\_STANDARD,\,NS\_FUNDAMENTAL)}.
\begin{table}[H]
\begin{tabular}{|l|l|}
\hline
Surface                          & Coordinates                                                                                       \\ \hline
\texttt{sf\_0}  & $(0,0,0,0,0,0,1,\,\,\,0,0,0,0,0,0,1,\,\,\,0,0,0,0,0,0,1,\,\,\,0,0,0,0,0,0,1,\,\,\,0,0,0,0,0,0,1)$ \\ \hline
\texttt{sf\_1}  & $(0,0,0,0,0,0,1,\,\,\,0,0,0,0,0,0,1,\,\,\,0,0,0,0,0,0,1,\,\,\,0,0,0,0,0,0,1,\,\,\,0,1,1,0,0,0,0)$ \\ \hline
\texttt{sf\_2}  & $(0,0,0,0,0,0,1,\,\,\,0,0,0,0,0,0,1,\,\,\,1,0,0,1,0,0,0,\,\,\,0,0,0,0,0,0,1,\,\,\,1,0,0,1,0,0,0)$ \\ \hline
\texttt{sf\_3}  & $(0,0,0,0,0,1,0,\,\,\,0,0,0,0,0,1,0,\,\,\,0,0,0,0,0,1,0,\,\,\,0,0,0,0,0,1,0,\,\,\,0,0,0,0,0,1,0)$ \\ \hline
\texttt{sf\_4}  & $(0,0,0,0,0,1,0,\,\,\,0,0,0,0,0,1,0,\,\,\,0,1,0,1,0,0,0,\,\,\,0,1,0,1,0,0,0,\,\,\,0,1,0,1,0,0,0)$ \\ \hline
\texttt{sf\_5}  & $(0,0,0,0,0,1,0,\,\,\,1,0,1,0,0,0,0,\,\,\,0,0,0,0,0,1,0,\,\,\,1,0,1,0,0,0,0,\,\,\,0,0,0,0,0,1,0)$ \\ \hline
\texttt{sf\_6}  & $(0,0,0,0,1,0,0,\,\,\,0,0,0,0,1,0,0,\,\,\,0,0,0,0,1,0,0,\,\,\,0,0,0,0,1,0,0,\,\,\,0,0,0,0,1,0,0)$ \\ \hline
\texttt{sf\_7}  & $(0,0,0,1,0,0,0,\,\,\,0,0,0,1,0,0,0,\,\,\,0,0,0,1,0,0,0,\,\,\,0,0,0,1,0,0,0,\,\,\,0,0,0,1,0,0,0)$ \\ \hline
\texttt{sf\_8}  & $(0,0,1,0,0,0,0,\,\,\,0,0,1,0,0,0,0,\,\,\,0,0,1,0,0,0,0,\,\,\,0,0,1,0,0,0,0,\,\,\,0,0,1,0,0,0,0)$ \\ \hline
\texttt{sf\_9}  & $(0,0,1,0,0,1,0,\,\,\,0,0,1,0,0,1,0,\,\,\,0,1,1,1,0,0,0,\,\,\,0,1,1,1,0,0,0,\,\,\,0,0,0,1,0,0,1)$ \\ \hline
\texttt{sf\_10} & $(0,1,0,0,0,0,0,\,\,\,0,1,0,0,0,0,0,\,\,\,0,1,0,0,0,0,0,\,\,\,0,1,0,0,0,0,0,\,\,\,0,1,0,0,0,0,0)$ \\ \hline
\texttt{sf\_11} & $(0,1,0,1,0,0,0,\,\,\,0,1,0,1,0,0,0,\,\,\,0,0,0,0,0,1,0,\,\,\,0,0,0,0,0,1,0,\,\,\,0,0,0,0,0,1,0)$ \\ \hline
\texttt{sf\_12} & $(0,1,1,0,0,0,0,\,\,\,0,1,1,0,0,0,0,\,\,\,0,1,1,0,0,0,0,\,\,\,0,1,1,0,0,0,0,\,\,\,0,0,0,0,0,0,1)$ \\ \hline
\texttt{sf\_13} & $(1,0,0,0,0,0,0,\,\,\,1,0,0,0,0,0,0,\,\,\,1,0,0,0,0,0,0,\,\,\,1,0,0,0,0,0,0,\,\,\,1,0,0,0,0,0,0)$ \\ \hline
\texttt{sf\_14} & $(1,0,0,0,0,1,0,\,\,\,1,0,0,0,0,1,0,\,\,\,0,1,0,0,0,0,1,\,\,\,1,1,0,1,0,0,0,\,\,\,0,1,0,0,0,0,1)$ \\ \hline
\texttt{sf\_15} & $(1,0,0,0,0,1,0,\,\,\,1,0,0,0,0,1,0,\,\,\,0,1,0,0,0,0,1,\,\,\,1,1,0,1,0,0,0,\,\,\,0,2,1,0,0,0,0)$ \\ \hline
\texttt{sf\_16} & $(1,0,0,1,0,0,0,\,\,\,1,0,0,1,0,0,0,\,\,\,0,0,0,0,0,0,1,\,\,\,1,0,0,1,0,0,0,\,\,\,0,0,0,0,0,0,1)$ \\ \hline
\texttt{sf\_17} & $(1,0,0,1,0,0,0,\,\,\,1,0,0,1,0,0,0,\,\,\,0,0,0,0,0,0,1,\,\,\,1,0,0,1,0,0,0,\,\,\,0,1,1,0,0,0,0)$ \\ \hline
\texttt{sf\_18} & $(1,0,1,0,0,0,0,\,\,\,0,0,0,0,0,1,0,\,\,\,1,0,0,1,1,0,0,\,\,\,0,1,0,1,0,0,0,\,\,\,1,0,0,1,1,0,0)$ \\ \hline
\texttt{sf\_19} & $(1,0,1,0,0,0,0,\,\,\,0,0,0,0,0,1,0,\,\,\,1,0,1,0,0,0,0,\,\,\,0,0,0,0,0,1,0,\,\,\,1,0,1,0,0,0,0)$ \\ \hline
\texttt{sf\_20} & $(1,0,1,0,0,1,0,\,\,\,0,0,0,0,0,2,0,\,\,\,0,1,1,0,0,0,1,\,\,\,0,1,0,1,0,1,0,\,\,\,0,0,0,0,0,0,2)$ \\ \hline
\texttt{sf\_21} & $(1,0,1,0,0,1,0,\,\,\,0,0,0,0,0,2,0,\,\,\,0,1,1,0,0,0,1,\,\,\,0,1,0,1,0,1,0,\,\,\,0,1,1,0,0,0,1)$ \\ \hline
\texttt{sf\_22} & $(1,0,1,0,0,1,0,\,\,\,0,0,0,0,0,2,0,\,\,\,0,1,1,0,0,0,1,\,\,\,0,1,0,1,0,1,0,\,\,\,0,2,2,0,0,0,0)$ \\ \hline
\texttt{sf\_23} & $(1,0,1,0,0,1,0,\,\,\,0,0,0,0,0,2,0,\,\,\,1,1,1,1,0,0,0,\,\,\,0,1,0,1,0,1,0,\,\,\,1,0,0,1,0,0,1)$ \\ \hline
\texttt{sf\_24} & $(1,0,1,0,0,1,0,\,\,\,1,0,1,0,0,1,0,\,\,\,0,1,1,0,0,0,1,\,\,\,1,1,1,1,0,0,0,\,\,\,0,0,0,0,0,0,2)$ \\ \hline
\texttt{sf\_25} & $(1,0,1,1,0,0,0,\,\,\,0,0,0,1,0,1,0,\,\,\,0,0,1,0,0,0,1,\,\,\,0,0,0,1,0,1,0,\,\,\,0,0,1,0,0,0,1)$ \\ \hline
\texttt{sf\_26} & $(1,0,1,1,0,0,0,\,\,\,0,0,0,1,0,1,0,\,\,\,0,0,1,0,0,0,1,\,\,\,0,0,0,1,0,1,0,\,\,\,0,1,2,0,0,0,0)$ \\ \hline
\texttt{sf\_27} & $(1,1,1,0,0,0,0,\,\,\,0,1,0,0,0,1,0,\,\,\,1,1,1,0,0,0,0,\,\,\,0,1,0,0,0,1,0,\,\,\,1,0,0,0,0,0,1)$ \\ \hline
\texttt{sf\_28} & $(1,1,1,1,0,0,0,\,\,\,0,1,0,1,0,1,0,\,\,\,0,1,1,0,0,0,1,\,\,\,0,1,0,1,0,1,0,\,\,\,0,0,0,0,0,0,2)$ \\ \hline
\end{tabular}
\caption{The coordinates of the fundamental normal surfaces of the gadget.}
\label{tab:surfCoords}
\end{table}

\section{Constructing Normal Surfaces in the Gadget}\label{appCONSTRUCTION}
Given a normal surface $S$ in a triangulation $T$, we describe how to build a surface $S'$ in $T'=T_\tau^{abcd(G)}$ where outside of a tetrahedron $\tau$, $\mathbf{S_{T\setminus\tau}}=\mathbf{S'_{T'\setminus G}}$, and where $S'_G$ is isotopic to $S_\tau$ with a tube connecting two non-parallel pieces. In practice, normal surfaces of $T$ are generated, and then all valid tubes are constructed (i.e. non-zero components and local compatibility are satisfied). Table \ref{tab:surfCoordsTubings} lists the required permutations and the new coordinates for $S'_G$ in each case.

\begin{table}[H]
\begin{tabular}{|l|l|l|l|}\hline
Tube in $\tau$                            & Tube in $G$                               & Perm. & New coordinates \\\hline
\texttt{tri\_0:tri\_1} & $G_{t_0}^{t_2}$ & $(2,0,1,3)$ & $\begin{aligned}&(t_1-1)\mathbf{G_{t_0}} + t_2\mathbf{G_{t_1}} + (t_0-1)\mathbf{G_{t_2}} + t_3\mathbf{G_{t_3}} \\ &+ q_{01/23}\mathbf{G_{q_{02/13}}} + \mathbf{G_{t_0}^{t_2}}\end{aligned}$                 \\\hline

\texttt{tri\_0:tri\_2} & $G_{t_0}^{t_2}$ & $(0,1,2,3)$ & $\begin{aligned}&(t_0-1)\mathbf{G_{t_0}} + t_1\mathbf{G_{t_1}} + (t_2-1)\mathbf{G_{t_2}} + t_3\mathbf{G_{t_3}} \\ &+ q_{02/13}\mathbf{G_{q_{02/13}}} + \mathbf{G_{t_0}^{t_2}}\end{aligned}$               \\\hline

\texttt{tri\_0:tri\_3} & $G_{t_0}^{t_3}$ & $(0,1,2,3)$ & $\begin{aligned}&(t_0-1)\mathbf{G_{t_0}} + t_1\mathbf{G_{t_1}} + t_2\mathbf{G_{t_2}} + (t_3-1)\mathbf{G_{t_3}} \\ &+ q_{03/12}\mathbf{G_{q_{03/12}}} + \mathbf{G_{t_0}^{t_3}}\end{aligned}$                \\\hline

\texttt{tri\_1:tri\_2} & $G_{t_1}^{t_2}$ & $(0,1,2,3)$ & $\begin{aligned}&t_0\mathbf{G_{t_0}} + (t_1-1)\mathbf{G_{t_1}} + (t_2-1)\mathbf{G_{t_2}} + t_3\mathbf{G_{t_3}} \\ &+ q_{03/12}\mathbf{G_{q_{03/12}}} + \mathbf{G_{t_1}^{t_2}}\end{aligned}$                \\\hline

\texttt{tri\_1:tri\_3} & $G_{t_1}^{t_3}$ & $(0,1,2,3)$ & $\begin{aligned}&t_0\mathbf{G_{t_0}} + (t_1-1)\mathbf{G_{t_1}} + t_2\mathbf{G_{t_2}} + (t_3-1)\mathbf{G_{t_3}} \\ &+ q_{02/13}\mathbf{G_{q_{02/13}}} + \mathbf{G_{t_1}^{t_3}}\end{aligned}$                \\\hline

\texttt{tri\_2:tri\_3} & $G_{t_1}^{t_3}$ & $(2,0,1,3)$ & $\begin{aligned}&t_1\mathbf{G_{t_0}} + (t_2-1)\mathbf{G_{t_1}} + t_0\mathbf{G_{t_2}} + (t_3-1)\mathbf{G_{t_3}} \\ &+ q_{01/23}\mathbf{G_{q_{02/13}}} + \mathbf{G_{t_1}^{t_3}}\end{aligned}$                \\\hline

\texttt{tri\_0:quad\_01/23} & $G_{t_0}^{q_{03/12}}$ & $(0,3,1,2)$ & $\begin{aligned}&(t_0-1)\mathbf{G_{t_0}} + t_2\mathbf{G_{t_1}} + t_3\mathbf{G_{t_2}} + t_1\mathbf{G_{t_3}} \\ &+ (q_{01/23}-1)\mathbf{G_{q_{03/12}}} + \mathbf{G_{t_0}^{q_{03/12}}}\end{aligned}$                \\\hline

\texttt{tri\_0:quad\_02/13} & $G_{t_3}^{q_{03/12}}$ & $(3,1,0,2)$ & $\begin{aligned}&t_2\mathbf{G_{t_0}} + t_1\mathbf{G_{t_1}} + t_3\mathbf{G_{t_2}} + (t_0-1)\mathbf{G_{t_3}} \\ &+ (q_{02/13}-1)\mathbf{G_{q_{03/12}}} + \mathbf{G_{t_3}^{q_{03/12}}}\end{aligned}$                \\\hline

\texttt{tri\_0:quad\_03/12} & $G_{t_0}^{q_{03/12}}$ & $(0,1,2,3)$ & $\begin{aligned}&(t_0-1)\mathbf{G_{t_0}} + t_1\mathbf{G_{t_1}} + t_2\mathbf{G_{t_2}} + t_3\mathbf{G_{t_3}} \\ &+ (q_{03/12}-1)\mathbf{G_{q_{03/12}}} + \mathbf{G_{t_0}^{q_{03/12}}}\end{aligned}$                \\\hline

\texttt{tri\_1:quad\_01/23} & $G_{t_3}^{q_{03/12}}$ & $(0,3,1,2)$ & $\begin{aligned}&t_0\mathbf{G_{t_0}} + t_2\mathbf{G_{t_1}} + t_3\mathbf{G_{t_2}} + (t_1-1)\mathbf{G_{t_3}} \\ &+ (q_{01/23}-1)\mathbf{G_{q_{03/12}}} + \mathbf{G_{t_3}^{q_{03/12}}}\end{aligned}$                \\\hline

\texttt{tri\_1:quad\_02/13} & $G_{t_0}^{q_{03/12}}$ & $(2,0,1,3)$ & $\begin{aligned}&(t_1-1)\mathbf{G_{t_0}} + t_2\mathbf{G_{t_1}} + t_0\mathbf{G_{t_2}} + t_3\mathbf{G_{t_3}} \\ &+ (q_{02/13}-1)\mathbf{G_{q_{03/12}}} + \mathbf{G_{t_0}^{q_{03/12}}}\end{aligned}$                \\\hline

\texttt{tri\_1:quad\_03/12} & $G_{t_0}^{q_{03/12}}$ & $(1,0,3,2)$ & $\begin{aligned}&(t_1-1)\mathbf{G_{t_0}} + t_0\mathbf{G_{t_1}} + t_3\mathbf{G_{t_2}} + t_2\mathbf{G_{t_3}} \\ &+ (q_{03/12}-1)\mathbf{G_{q_{03/12}}} + \mathbf{G_{t_0}^{q_{03/12}}}\end{aligned}$                \\\hline

\texttt{tri\_2:quad\_01/23} & $G_{t_0}^{q_{03/12}}$ & $(1,2,0,3)$ & $\begin{aligned}&(t_2-1)\mathbf{G_{t_0}} + t_0\mathbf{G_{t_1}} + t_1\mathbf{G_{t_2}} + t_3\mathbf{G_{t_3}} \\ &+ (q_{01/23}-1)\mathbf{G_{q_{03/12}}} + \mathbf{G_{t_0}^{q_{03/12}}}\end{aligned}$                \\\hline

\texttt{tri\_2:quad\_02/13} & $G_{t_0}^{q_{03/12}}$ & $(3,1,0,2)$ & $\begin{aligned}&(t_2-1)\mathbf{G_{t_0}} + t_1\mathbf{G_{t_1}} + t_3\mathbf{G_{t_2}} + t_0\mathbf{G_{t_3}} \\ &+ (q_{02/13}-1)\mathbf{G_{q_{03/12}}} + \mathbf{G_{t_0}^{q_{03/12}}}\end{aligned}$                \\\hline

\texttt{tri\_2:quad\_03/12} & $G_{t_3}^{q_{03/12}}$ & $(1,0,3,2)$ & $\begin{aligned}&t_1\mathbf{G_{t_0}} + t_0\mathbf{G_{t_1}} + t_3\mathbf{G_{t_2}} + (t_2-1)\mathbf{G_{t_3}} \\ &+ (q_{03/12}-1)\mathbf{G_{q_{03/12}}} + \mathbf{G_{t_3}^{q_{03/12}}}\end{aligned}$                \\\hline

\texttt{tri\_3:quad\_01/23} & $G_{t_3}^{q_{03/12}}$ & $(1,2,0,3)$ & $\begin{aligned}&t_2\mathbf{G_{t_0}} + t_0\mathbf{G_{t_1}} + t_1\mathbf{G_{t_2}} + (t_3-1)\mathbf{G_{t_3}} \\ &+ (q_{01/23}-1)\mathbf{G_{q_{03/12}}} + \mathbf{G_{t_3}^{q_{03/12}}}\end{aligned}$                \\\hline

\texttt{tri\_3:quad\_02/13} & $G_{t_3}^{q_{03/12}}$ & $(2,0,1,3)$ & $\begin{aligned}&t_1\mathbf{G_{t_0}} + t_2\mathbf{G_{t_1}} + t_0\mathbf{G_{t_2}} + (t_3-1)\mathbf{G_{t_3}} \\ &+ (q_{02/13}-1)\mathbf{G_{q_{03/12}}} + \mathbf{G_{t_3}^{q_{03/12}}}\end{aligned}$                \\\hline

\texttt{tri\_3:quad\_03/12} & $G_{t_3}^{q_{03/12}}$ & $(0,1,2,3)$ & $\begin{aligned}&t_0\mathbf{G_{t_0}} + t_1\mathbf{G_{t_1}} + t_2\mathbf{G_{t_2}} + (t_3-1)\mathbf{G_{t_3}} \\ &+ (q_{03/12}-1)\mathbf{G_{q_{03/12}}} + \mathbf{G_{t_3}^{q_{03/12}}}\end{aligned}$     \\\hline          
\end{tabular}
\caption{For each annulus in a tetrahedron $\tau$, this declares which surface in $G$ of the same annulus type shall be used, under which permutation, and the transformed surface coordinates.}
\label{tab:surfCoordsTubings}
\end{table}

\end{document}